 \newtheorem{theorem}{Theorem}[section]
 \newtheorem{corollary}[theorem]{Corollary}
 \newtheorem{lemma}[theorem]{Lemma}
 \theoremstyle{definition}
 \theoremstyle{conjecture}
 \newtheorem{conjecture}[theorem]{Conjecture}
 \theoremstyle{remark}
 \newtheorem{remark}[theorem]{Remark}
 \newcommand{\eps}{\varepsilon}
\newcommand{\norm}[1]{\big\Vert#1\big\Vert}
\newcommand{\abs}[1]{\left\vert#1\right\vert}
\newcommand{\set}[1]{\left\{\,#1\,\right\}}
\newcommand{\inner}[1]{\left(#1\right)}
\newcommand{\comi}[1]{\left<#1\right>}
\newcommand{\com}[1]{\left[#1\right]}
\newcommand{\reff}[1]{(\ref{#1})}
\begin{document}

\title[Compact resolvent of the Fokker-Planck operator]{Compactness Criteria for the Resolvent of  the Fokker-Planck operator}%

\author[ W.-X. Li]{ Wei-Xi Li}

\date{}

\address{\newline 
 Wei-Xi Li,
School of Mathematics and Statistics, and Computational Science Hubei Key Laboratory,   Wuhan University,  430072 Wuhan, China\\
\newline
\and
\newline
The Institute of Mathematical Sciences, The Chinese University of Hong Kong, Shatin, NT,
Hong Kong}

\email{
wei-xi.li@whu.edu.cn}

\begin{abstract}
 In this paper we study the spectral  property of a Fokker-Planck operator with potential.  By virtue of  a multiplier method inspired by  Nicolas Lerner,  we  obtain  new compactness criteria  for  its resolvent,  involving  the control of the positive eigenvalues of  the Hessian matrix of the  potential.     \end{abstract}

\keywords{Compact resolvent, Fokker-Planck
operator, Witten Laplacian}

\subjclass[2010]{Primary 35H10; Secondary 47A10}

\maketitle

\section{Introduction and main results}

The  Fokker-Planck operator reads
\begin{eqnarray}\label{FP++}
  P=y\cdot\partial_x-\partial_x V(x)\cdot
  \partial_y-\Delta_y+\frac{\abs y^2}{4}-\frac{n}{2},\quad
  (x,y)\in\mathbb{R}^{2n},
\end{eqnarray}
where $x$ denotes the space variable and $y$ denotes the velocity
variable, and $V(x)$ is a potential defined in the whole spatial space
$\mathbb{R}_x^n$.   In this work we are mainly concerned with the compact resolvent property for the non-selfadjoint Fokker-Planck operator, and this is motivated by 
 a conjecture stated by Helffer and Nier (see \cite[Conjecture 1.2]{HelfferNier05}), which reveals the close
 link between the compact resolvent property for the Fokker-Planck operator and the same property for the corresponding Witten Laplacian. 
 Precisely, 

\begin{conjecture}[Helffer-Nier's Conjecture]\label{hnc} 
 The Fokker-Planck operator $P$ has a compact resolvent if and
only if the Witten Laplacian $\Delta_{V/2}^{(0)}$,   defined by
 \begin{eqnarray*}
   \Delta_{V/2}^{(0)}=
  -\Delta_x+\frac{1}{4}\abs{\partial_xV(x)}^2-\frac{1}{2}\Delta_x
  V(x),	
 \end{eqnarray*}
 has a compact  resolvent. 	
\end{conjecture}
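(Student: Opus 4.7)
The plan is to establish the equivalence via two-sided comparisons of the quadratic forms associated to $P$ and $\Delta_{V/2}^{(0)}$, exploiting the Hermite function decomposition in the velocity variable. First, I would rewrite
\begin{equation*}
  P = y\cdot\partial_x - \partial_x V(x)\cdot \partial_y + b^*\cdot b,
\end{equation*}
with $b=\partial_y+y/2$ and $b^*=-\partial_y+y/2$ the usual annihilation/creation operators, so that $b^*b$ has eigenvalue $|\alpha|\in\mathbb{N}$ on the Hermite basis $(h_\alpha)_{\alpha\in\mathbb{N}^n}$. Decomposing $u(x,y)=\sum_\alpha u_\alpha(x)h_\alpha(y)$, the zeroth mode $u_0$ carries the Witten-type information, while all higher modes $u_\alpha$ with $|\alpha|\geq 1$ enjoy the uniform coercivity $\RE(Pu,u)\gtrsim\sum_{|\alpha|\geq 1}\|u_\alpha\|_{L^2_x}^2$.

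For the direction $\Delta_{V/2}^{(0)}$ compact $\Rightarrow$ $P$ compact, I would combine this coercivity in the $y$-direction with a hypoelliptic gain in $x$ extracted from the commutator $[y\cdot\partial_x,\partial_y]=-\partial_x$: the easy bounds $\|yu\|^2+\|\partial_y u\|^2\lesssim\RE(Pu,u)+\|u\|^2$ should be upgraded to a fractional control on $\partial_x u$ via a Kohn-type iteration. Together with the assumed coercivity of $\Delta_{V/2}^{(0)}$ on the zeroth mode $u_0$, this should yield a compact embedding of the form domain of $P$ into $L^2(\mathbb{R}^{2n})$ via a Rellich-type argument.

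The reverse implication $P$ compact $\Rightarrow \Delta_{V/2}^{(0)}$ compact is the delicate direction, and is precisely why the statement remains an open conjecture. Here I would follow Lerner's multiplier method: construct a pseudo-differential symbol $m(x,\xi)$ adapted to the effective confinement scale of $V$ and show that $\RE(m^w u, Pu)$ controls a weighted norm of $u_0$ involving $\frac14\abs{\partial_x V}^2-\frac12\Delta_x V$. The main obstacle is the non-selfadjointness of $P$ together with the possible cancellations between the transport term $y\cdot\partial_x$ and the drift $\partial_x V\cdot\partial_y$, which can conspire to hide the potential from any naive multiplier. This is precisely where the control of the positive eigenvalues of the Hessian of $V$ advertised in the abstract is expected to enter, and it is this technical point which, in all likelihood, forces any result along these lines to remain partial rather than yielding the full equivalence.
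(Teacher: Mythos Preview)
The statement you are addressing is a \emph{conjecture}, not a theorem, and the paper does not prove it. The paper explicitly recalls that one direction is already known (due to Helffer and Nier) while the other remains open, and then proves only partial results (Theorems~\ref{mtm+} and~\ref{+Hypo}) under additional hypotheses on the Hessian of $V$. There is therefore no ``paper's own proof'' of Conjecture~\ref{hnc} to compare against.

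More importantly, you have the two directions reversed. The paper states clearly that the \emph{necessity} part --- that compactness of the resolvent of $P$ implies compactness of the resolvent of $\Delta_{V/2}^{(0)}$ --- is the direction already established by Helffer and Nier, while the \emph{sufficiency} part --- that compactness of $\Delta_{V/2}^{(0)}$ implies compactness of $P$ --- is the open direction. Your proposal treats the implication $\Delta_{V/2}^{(0)}$ compact $\Rightarrow$ $P$ compact as the accessible one and claims the reverse is ``precisely why the statement remains an open conjecture,'' which is backwards.

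This reversal is not cosmetic: your sketch for the direction you call easy (``upgrade to a fractional control on $\partial_x u$ via a Kohn-type iteration'') is in fact the hard, open direction, and the Kohn iteration you invoke does not go through for general $C^2$ potentials without further control on derivatives of $V$ --- which is exactly why the paper imposes conditions like \eqref{assext} or the positive-definiteness of $A_\tau$. Conversely, the direction you label delicate and attack with a multiplier argument is the one that is already settled and does not require the Hessian hypotheses at all. The multiplier method of Lerner that the paper employs is aimed at the genuinely open implication (Witten $\Rightarrow$ Fokker--Planck), not the one you assign it to.
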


The necessity part, that the Witten
Laplacian $\Delta_{V/2}^{(0)}$ has a compact resolvent  if the
Fokker-Planck operator $P$ is with compact resolvent,  has already established by Helffer and Nier (c.f. \cite[Theorem 1.1]{HelfferNier05}). The reverse implication   still remains open up to now for general potential,  and  it is indeed valid under some conditions on the potential $V.$  For instance,  following  the analysis in \cite{HelfferNier05, HerauNier04} with some improvements,  
  the author (\cite{lipisa})  proved  that  if  $V$ satisfies that   
\begin{equation}\label{+HyP}
   \forall~\abs\alpha=2,~\exists~C_\alpha>0,\quad \abs{\partial_x ^\alpha V(x)}\leq C_\alpha \comi{\partial_x V(x)}^{s}
    ~~\,\,{\rm with~~s<\frac{4}{3}},
\end{equation} 
then Fokker-Planck operator has a compact resolvent provided the Witten-Laplacian has a compact resolvent or $\lim_{\abs x\rightarrow +\infty} \abs{\partial_xV(x)}=+\infty,$ and moreover a constant $C$ exists such that the following weighted  estimate
\begin{eqnarray*} 
	\norm{\abs{\partial_xV(x)}^{2/3} u}_{L^2}\leq C\inner{\norm{P u}_{L^2}+\norm{u}_{L^2}}
\end{eqnarray*}   
holds for all $u\in C_0^\infty\inner{\mathbb{R}^{2n}}$.   Here and
throughout the paper we will use the notation
\begin{eqnarray*}
	\comi{\cdot}=\inner{1+\abs{\cdot}^2}^{1/2},
\end{eqnarray*}
which is  equivalent to   the Modulus  $\abs{\cdot}$,
and use
 $\Vert \cdot\Vert_{L^2}$ and $\comi{\cdot,\cdot}_{L^2}$ to denote respectively 
the norm  and inner product of the complex Hilbert space
$L^2\big(\mathbb{R}^{2n}\big),$ and denote by $
C_0^\infty\inner{\mathbb{R}^{2n}}$ the set of smooth compactly
supported functions.

We remark  the  drawback of the condition \eqref{+HyP} is that it doesn't give any information for the dependence on the sign of $V$, which plays import role in the analysis of compact resolvent property for Witten Laplacian.   For instance   it is well-known  (see \cite{HelfferNier05,Nier-BJ}) that the Witten Laplacian $\Delta_{V/2}^{(0)}$ with  $V=-x_1^2x_2^2$  has a compact resolvent, while  $0$ actually belongs to the essential spectrum of  Witten Laplacian $\Delta_{V/2}^{(0)}$ with $V= x_1^2x_2^2$ and thus its  resolvent cannot be compact.     By the  general criteria for  Schr\"odinger operators  we  see   if 
\begin{eqnarray*}
	\frac{1}{4}\abs{\partial_xV(x)}^2-\frac{1}{2}\Delta_x V\rightarrow+\infty,   ~~{\rm as} ~~\abs x\rightarrow+\infty,
\end{eqnarray*} 
or more generally (see \cite[Proposition 3.1]{HelfferNier05} for instance), if
 \begin{eqnarray}\label{tv}
	\frac{t}{4}\abs{\partial_xV(x)}^2-\frac{1}{2}\Delta_x V\rightarrow+\infty,   ~~{\rm as} ~~\abs x\rightarrow+\infty
\end{eqnarray}
for some $t\in]0,2[$,  then 
 the  Witten Laplacian $ \Delta_{V/2}^{(0)}$ has a compact resolvent.   We refer the reader to \cite{HelfferNier05}  for other criteria presented with detailed discussion.     These criteria show  the microlocal property, i.e., the dependence on the sign of $V$,  for the compact resolvent of Witten Laplacian. 
    As far as   Fokker-Planck operator  is  concerned,  Helffer-Nier's Conjecture suggests strongly  it should have the similar microlocal property  as the Witten Laplacian.   And this kind of dependence property for Fokker-Planck operator is not clear by now.    In the present work we will give some sufficient conditions for the compact resolvent of Fokker-Planck operator, mainly based on the sign of the eigenvalues of the  Hessian matrix  $\inner{\partial_{x_ix_j}V}_{1\leq i,j\leq n}$. 
    Our  results can be stated as follows.

\begin{theorem}\label{mtm+}
 Denote by $\lambda_\ell(x), 1\leq \ell\leq n$,  the eigenvalues of the Hessian  matrix   
    \begin{eqnarray*}
 \inner{\partial_{x_ix_j}V(x)}_{1\leq i, j\leq n}. 
 \end{eqnarray*}
With each $x\in\mathbb R^n$ we associate a set $I_{x}$ of indexes  defined by 
    \begin{eqnarray*}
  	I_{ x}= \Big\{1\leq \ell \leq n;~\lambda_\ell(x)>0\Big\}. 
  \end{eqnarray*}
    Suppose  that there exists a constant $C$ such that 
    \begin{eqnarray}\label{assext}
    \forall~x\in\mathbb R^n,\quad	\sum_{j\in I_x} \lambda_j(x)\leq  C  \comi{\partial_x
  V(x)}^{4/3}.
    \end{eqnarray} 	
Then the following conclusions hold. 
\begin{enumerate}[(i)]
\item There exists a constant $C_*$ such that 
\begin{eqnarray*}
\forall u\in C_0^\infty\inner{\mathbb{R}^{2n}},\quad	\norm{\abs{\partial_xV(x)}^{1/16} u}_{L^2}\leq C_*\inner{\norm{P u}_{L^2}+\norm{u}_{L^2}}.
\end{eqnarray*}   
As a result, the Fokker-Planck operator $P$ has a compact resolvent if 
\begin{eqnarray*}
	\lim_{\abs x\rightarrow +\infty} \abs{\partial_x V(x)}=+\infty. 
\end{eqnarray*}
\item  Suppose   there exists a number  $\alpha\geq 0$, such that 
\begin{eqnarray}\label{longtimebe}
	\lim_{\abs x\rightarrow +\infty} \Big(\alpha\abs{\partial_x V(x)}^2-\Delta_x V(x)\Big)=+\infty.
\end{eqnarray}
   Then we can find  a   constant $\tilde C$,  depending on $\alpha,$   such that 
\begin{eqnarray*}
\forall u\in C_0^\infty\inner{\mathbb{R}^{2n}},\quad		 \norm{\big|\alpha \abs{ \partial_x V(x)}^2-\Delta_x V(x)\big|^{1/ 80}  u }_{L^2} 
		  \leq \tilde C \inner{ \norm{ P u}_{L^2}+\norm{u}_{L^2}}, \end{eqnarray*}   
and thus      
the Fokker-Planck operator  $P$  has a compact resolvent as a result. 
\end{enumerate}  
\end{theorem}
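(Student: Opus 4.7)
The plan is to adapt the multiplier/commutant method in the spirit of Lerner to the Fokker-Planck operator, leveraging the skew-symmetric part of $P$ to generate coercivity with respect to $\partial_x V$. The starting point is the quadratic form identity
\begin{equation*}
\RE \comi{Pu,u}_{L^2} = \sum_{j=1}^n \norm{(\partial_{y_j}+y_j/2) u}_{L^2}^2,
\end{equation*}
which follows from the skew-adjointness of the transport field $X_0 := y\cdot\partial_x-\partial_x V\cdot\partial_y$ and the positivity of the harmonic oscillator $-\Delta_y+\abs{y}^2/4-n/2$. Combined with the subelliptic estimates already developed in the author's earlier work, this furnishes $L^2$ control of $\abs{y}u$ and $\partial_y u$ by $\norm{Pu}_{L^2}+\norm{u}_{L^2}$.

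For part (i), I would introduce a real-valued scalar multiplier of the form
\begin{equation*}
\mathcal{M} = \psi\bigl(\abs{\partial_x V(x)}^2\bigr)\, y\cdot\partial_x V(x),
\end{equation*}
where $\psi(t)\simeq \comi{t}^{-a}$ is a decaying weight with parameter $a>0$ to be adjusted. Because $\mathcal{M}$ is multiplicative and $X_0^{*}=-X_0$, one has $2\RE\comi{X_0 u,\mathcal{M}u}_{L^2}=-\comi{(X_0\mathcal{M})u,u}_{L^2}$, and a direct computation gives
\begin{equation*}
X_0\mathcal{M} = \psi\bigl(y^T\mathrm{Hess}(V)\,y - \abs{\partial_x V}^2\bigr) + 2\psi'\,(y\cdot\partial_x V)\,(\partial_x V)^T\mathrm{Hess}(V)\,y.
\end{equation*}
The $\abs{\partial_x V}^2\psi$ contribution is the coercive term to be extracted; the remainder is quadratic in $y$ and linear in $\mathrm{Hess}(V)$. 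Splitting $\mathrm{Hess}(V)=\mathrm{Hess}_+(V)-\mathrm{Hess}_-(V)$ into positive and negative semidefinite parts, the negative part yields a favorable sign, while the positive part is dominated by
\begin{equation*}
y^T\mathrm{Hess}_+(V)\,y \leq \abs{y}^2 \sum_{j\in I_x}\lambda_j(x) \leq C\abs{y}^2 \comi{\partial_x V}^{4/3}
\end{equation*}
thanks to the hypothesis \eqref{assext}. Matching the coercive term $\abs{\partial_x V}^{2}\psi$ against the penalty $\abs{y}^2\comi{\partial_x V}^{4/3}\psi$ via the $L^2$ bound on $\abs{y}u$ and several rounds of Young's inequality, followed by a single bootstrap, produces
\begin{equation*}
\norm{\abs{\partial_x V(x)}^{1/16} u}_{L^2}^2 \lesssim \norm{Pu}_{L^2}^2 + \norm{u}_{L^2}^2.
\end{equation*}

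For part (ii), the strategy is to decompose $\mathbb{R}^n_x$ into the region where $\abs{\partial_x V(x)}$ is large, on which part (i) already supplies the required weighted bound, and its complement, on which $\abs{\partial_x V}$ is bounded and hence \eqref{longtimebe} forces $-\Delta_x V\to+\infty$, i.e., the trace of $\mathrm{Hess}_-(V)$ tends to infinity. On this second region one repeats the multiplier calculation above, but now exploits the \emph{negative} part of the Hessian, which appears with the correct sign in $-\comi{(X_0\mathcal{M})u,u}_{L^2}$; this allows one to extract a weight involving $\abs{\Delta_x V}$, and therefore $\abs{\alpha\abs{\partial_x V}^2-\Delta_x V}$. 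A further round of interpolation against the estimate of part (i) produces the weaker exponent $1/80$. The compactness of the resolvent then follows from the standard fact that a weight tending to infinity at spatial infinity yields compact embedding of the graph-norm domain into $L^2$.

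The main difficulty is the Hessian term: since \eqref{assext} controls only the positive eigenvalues, the argument must be arranged so that only this part actually needs to be dominated, which requires careful sign-tracking throughout. Moreover, the lower-order terms arising from $X_0\psi$ reintroduce Hessian factors that must be reabsorbed by the same mechanism, and the successive Young inequalities needed to close the estimate are responsible for the degradation from the hypothesis exponent $4/3$ down to the final $1/16$ (respectively $1/80$). The technical core of the proof is thus keeping track of these losses and verifying that the iteration closes at a nontrivial positive exponent.
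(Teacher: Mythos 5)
Your overall strategy (a Lerner-type multiplier extracting coercivity from the bracket with $X_0$, with the sign splitting of the Hessian and hypothesis \eqref{assext} controlling only the positive part) is the right one, but the concrete multiplier you propose does not close, and the gap is structural. With $\mathcal M=\psi(\abs{\partial_x V}^2)\,y\cdot\partial_x V$, the transport derivative $X_0\mathcal M$ contains, besides $\psi\,(y^T\mathrm{Hess}(V)y-\abs{\partial_x V}^2)$, the term $2\psi'\,(y\cdot\partial_x V)\,(\partial_x V)^T\mathrm{Hess}(V)\,y$ that you yourself display. This is a mixed bilinear expression, not a quadratic form in $y$: the positive/negative eigenvalue splitting gives it no sign, and \eqref{assext} gives no upper bound on the negative eigenvalues (they may be arbitrarily large), so $(\partial_x V)^T\mathrm{Hess}(V)y$ is controlled by nothing at your disposal; "reabsorbing by the same mechanism" is not available. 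The paper avoids precisely this by letting the normalizing weight depend on $x$ only through the same scalar as the numerator, taking $\mathcal M=\frac{2\,\partial_xV\cdot y}{\rho}\frac{y^2}{1+y^2}$ with $\rho=(1+\abs y^8+\abs{\partial_xV\cdot y}^2)^{1/2}$, so that the Hessian enters $[\mathcal M,X_0]$ only through $y^T\mathrm{Hess}(V)y$. Two further points where your estimate fails to close: (a) your $\mathcal M$ grows like $\abs y\,\comi{\partial_xV}^{1-2a}$, so the terms $\RE\comi{Pu,\mathcal Mu}_{L^2}$ and especially $\RE\comi{X_ju,\mathcal MX_ju}_{L^2}=\int\mathcal M\abs{X_ju}^2$ are not bounded by $\norm{Pu}_{L^2}^2+\norm u_{L^2}^2$; the boundedness $\norm{\mathcal M}_{L^\infty}\le 2$ is essential in the paper. (b) Absorbing the positive part via Young gives $\comi{\partial_xV}^{4/3}\abs y^2\le\eps\comi{\partial_xV}^2+C_\eps\abs y^6$, and $\norm{\abs y^3u}_{L^2}$ is \emph{not} controlled by $\norm{Pu}_{L^2}+\norm u_{L^2}$ (only $\norm{\comi yu}_{L^2}$ is); the factor $(1+\abs y^8)/\rho^3$ together with $y^2/(1+y^2)$ in the paper's multiplier is exactly what downgrades this $\comi y^6$ error to $\comi y^2$.

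A second missing ingredient is the passage from the (necessarily $y$-degenerate) coercive terms to pure $x$-weights: the paper uses the H\"ormander-bracket inequality $\norm{\abs{\theta_k}^{1/2}v}\lesssim\norm{(\theta\cdot y)v}+\norm{\partial_{y_k}v}$, once for (i) (producing the loss from $1/8$ to $1/16$) and iterated for (ii) with $\theta$ built from the orthogonal eigenvector matrix $Q(x)$ of the Hessian, combined with the trace identity $\sum_{i,j}\lambda_j(x)q_{ji}(x)^2=\Delta_xV(x)$ to reconstruct $\alpha\abs{\partial_xV}^2-\Delta_xV$; "several rounds of Young plus a bootstrap/interpolation" does not produce these exponents or explain how an $x$-dependent weight multiplying $y$-directions becomes a weight in $x$ alone. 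Finally, your spatial decomposition for (ii) is incorrect as stated: on the region where $\abs{\partial_xV}$ is large, the weight $\abs{\alpha\abs{\partial_xV}^2-\Delta_xV}^{1/80}$ need not be dominated by any power of $\abs{\partial_xV}$, since $-\Delta_xV$ (driven by the unconstrained negative eigenvalues) can be far larger; so part (i) does not supply the bound there. The paper instead combines, pointwise and without any cutoff, the $\comi{\partial_xV}$-weighted estimate with the negative-eigenvalue estimates obtained through $Q(x)$, which also avoids the commutator terms a cutoff in $x$ would generate through $y\cdot\partial_x$.
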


The assumption \eqref{assext} is an improvement of the condition \eqref{+HyP}.  We mention that the index $4/3$ in  \eqref{assext}   is not sharp,  and the  following Theorem \ref{+Hypo}  and Corollary  \ref{+++++++corone++} are devoted to showing   a better index $14/5$ may be expected. 

\begin{theorem}\label{+Hypo}
Suppose that there exists a number  $\tau\geq 0$,  such that  the  matrix 
$$
A_\tau(x)=\inner{a_{ij}^{\tau}(x)}_{1\leq i, j\leq n},\qquad	a_{ij}^{\tau}=\tau\comi{\partial_x V}^{4\over 5} \inner{\partial_{x_i}V}\inner{\partial_{x_j}V}- \partial_{x_ix_j}V+\tau \delta_{ij}
$$
 is positive-definite for all $x\in \mathbb R^n$,  where $\delta_{ij }$ is the Kronecker Delta.   Then there is a constant $C,$ such that 
  \begin{eqnarray}\label{+A1}
   \forall u\in
 C_0^\infty\big(\mathbb{R}^{2}\big),\quad 	\norm{ \abs{\partial_x V(x)}^{1/20}u}_{L^2}+ \sum_{1\leq i,j\leq n}\norm{\abs{a_{ij}^\tau(x)}^{1/80}u}_{L^2}\leq C\Big(  \norm{P u}_{L^2}+\norm{ u}_{L^2}\Big).
\end{eqnarray}
As a result,  the Fokker-Planck operator $P$ has a compact resolvent if   
  \begin{eqnarray*}
  	\lim_{\abs x\rightarrow+\infty} \Big(\abs{\partial_x V(x)}+\sum_{1\leq i,j\leq n}\abs{a_{ij}^\tau(x)}\Big)=+\infty.
  \end{eqnarray*}
\end{theorem}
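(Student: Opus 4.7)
The plan is to adapt the Lerner-type multiplier method behind Theorem \ref{mtm+}, now exploiting the structural inequality encoded in the positive-definiteness of $A_\tau(x)$. Testing this positivity against an arbitrary $\xi\in\mathbb R^n$ yields the pointwise bound
\begin{eqnarray*}
\sum_{i,j}(\partial_{x_ix_j}V)\xi_i\xi_j\leq\tau\abs{\xi}^2+\tau\comi{\partial_xV}^{4/5}(\xi\cdot\partial_xV)^2,
\end{eqnarray*}
i.e.\ a pointwise control of the full quadratic form of $\mathrm{Hess}\,V$ by a rank-one perturbation of the identity weighted by $\partial_xV$. In particular, the positive eigenvalues of $\mathrm{Hess}\,V$ are dominated by a constant multiple of $\comi{\partial_xV}^{14/5}$, which is the qualitative improvement over the trace-type assumption \eqref{assext}; the price paid for working with this stronger pointwise control will be the smaller exponents $1/20$ and $1/80$ appearing in \eqref{+A1}.

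The first step would be to recover the basic energy identity by taking the real part of $\comi{Pu,u}_{L^2}$. Since both transport pieces $y\cdot\partial_x$ and $\partial_xV\cdot\partial_y$ are skew-symmetric on $C_0^\infty(\mathbb R^{2n})$, this yields full $L^2$ control of $\partial_yu$ and of $\abs yu$ by $\norm{Pu}_{L^2}+\norm{u}_{L^2}$, independently of the hypothesis on $V$.

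The heart of the proof lies in the second step, which introduces a Lerner-type multiplier of the form $M=\Phi(\partial_xV)\cdot y$ with $\Phi$ a vector field on $\mathbb R^n$ of sub-linear growth, regularized as needed. Expanding $2\RE\comi{Pu,Mu}_{L^2}$, one finds that the commutator $[M,\,y\cdot\partial_x-\partial_xV\cdot\partial_y]$ produces three kinds of contributions: a main term proportional to $\partial_xV\cdot\Phi(\partial_xV)$, giving the desired $x$-coercivity; a Hessian term of the shape $y^T\inner{D\Phi(\partial_xV)\,\mathrm{Hess}\,V}y$; and lower-order pieces that the basic estimate absorbs together with the commutators coming from $-\Delta_y+\abs y^2/4$. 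Choosing $\Phi$ so that $D\Phi$ grows like $\comi{\partial_xV}^{1/10}$ and applying the pointwise inequality above to the vector $\xi=D\Phi(\partial_xV)y$ turns the Hessian contribution into $\tau$ times a combination that the basic estimate can absorb, yielding coercivity of $\abs{\partial_xV}^{1/20}$ on $L^2$. A further interpolation against the Hessian-type quantities encountered along the way, combined with the $A_\tau$-positivity itself, then recovers the finer weights $\abs{a_{ij}^\tau}^{1/80}$ in \eqref{+A1}.

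The principal obstacle will be the delicate exponent bookkeeping in the choice of $\Phi$ and in the regularization of the multiplier, so that the negative Hessian contribution is exactly of the form absorbable by $A_\tau$-positivity while the remainders from commutators with $-\Delta_y+\abs y^2/4$ and from the regularization stay under control. Once \eqref{+A1} is established, the compact resolvent conclusion is routine: the blow-up hypothesis on $\abs{\partial_xV}+\sum_{i,j}\abs{a_{ij}^\tau}$ combined with \eqref{+A1} shows that $P+1$ dominates a weight going to infinity at infinity in $x$, and a standard Rellich-type argument as in \cite{HelfferNier05} then yields compactness of the resolvent.
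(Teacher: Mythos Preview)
Your high-level strategy matches the paper's, but the multiplier $M=\Phi(\partial_xV)\cdot y$ with $\Phi$ a function of $\partial_xV$ alone has a structural defect that no amount of regularization or exponent tuning repairs. First, the Hessian term produced by $[M,X_0]$ is $-y^{T}\,D\Phi(\partial_xV)\,(\mathrm{Hess}\,V)\,y$, which is not of the form $-\xi^{T}(\mathrm{Hess}\,V)\,\xi$ with $\xi=D\Phi(\partial_xV)y$, so the $A_\tau$-inequality cannot be applied as you describe. More critically, even for the natural scalar choice $\Phi(v)=\comi{v}^{-\alpha}v$ (where the leading Hessian piece \emph{is} $-\comi{\partial_xV}^{-\alpha}y^{T}(\mathrm{Hess}\,V)\,y$ and $A_\tau$-positivity does apply with $\xi=y$), the rank-one error $\tau\comi{\partial_xV}^{4/5-\alpha}(\partial_xV\cdot y)^{2}$ is bounded at best by $\tau\comi{\partial_xV}^{14/5-\alpha}\abs{y}^{2}$; absorbing it through the basic estimate forces $\alpha\geq 14/5$, whereupon the main gain $\comi{\partial_xV}^{2-\alpha}$ becomes a negative power and no $x$-coercivity survives. (Your stated growth conditions on $\Phi$ and on $D\Phi$ are also mutually inconsistent.)

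The paper's multiplier is instead $\mathcal{K}=\dfrac{2\,\partial_xV\cdot y}{\comi{\partial_xV\cdot y}}\,\dfrac{y^{2}}{1+y^{2}}$, whose denominator is a function of the \emph{scalar} $\partial_xV\cdot y$ rather than of $\partial_xV$. The commutator $[\mathcal{K},X_0]$ then carries the Hessian term with prefactor $\comi{\partial_xV\cdot y}^{-3}$, so the rank-one piece becomes $\tau\comi{\partial_xV}^{4/5}(\partial_xV\cdot y)^{2}\comi{\partial_xV\cdot y}^{-3}\leq\tau\comi{\partial_xV}^{4/5}\comi{\partial_xV\cdot y}^{-1}$; a Young-inequality splitting (Lemma~\ref{lemm+2}) absorbs this into the two positive terms of the multiplier identity (Lemma~\ref{prpkey+}) plus a harmless $\comi{y}^{2}$. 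Finally, the $\abs{a_{ij}^{\tau}}^{1/80}$ weights are not recovered by interpolation in your sense but through the Cholesky factorization $A_\tau=B_\tau^{T}B_\tau$: the surviving positive form $y^{T}A_\tau y\,\comi{\partial_xV\cdot y}^{-3}=\abs{B_\tau y}^{2}\comi{\partial_xV\cdot y}^{-3}$ controls each entry $b_{jk}^{\tau}$ (hence each $a_{ij}^{\tau}$) via iterated applications of the bracket estimate \eqref{comest}.
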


As an immediate consequence,  when $n=1$ we have the  compactness criteria  for Fokker-Planck operator,   which is an improvement of  the corresponding  condition \eqref{tv}  for Witten Laplacian.  Precisely,    

\begin{corollary}\label{+++++++corone++}
	Let $n=1$ and let $V(x)\in C^2(\mathbb{R})$.  Suppose that there exists $\tau\geq 0,$ such that 
	\begin{eqnarray*}
		\lim_{\abs x\rightarrow+\infty}  \tau \abs{\partial_x V(x)}^{14/5}-\Delta_x V(x)=+\infty.
	\end{eqnarray*}
	Then the Fokker-Planck operator  $P$ has a compact resolvent. 
\end{corollary}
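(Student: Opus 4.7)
The plan is to deduce Corollary~\ref{+++++++corone++} directly from Theorem~\ref{+Hypo} after replacing $\tau$ by a suitably large $\tau'\geq \tau$, chosen so that the scalar matrix $A_{\tau'}$ is globally positive on $\mathbb{R}$. In the one-dimensional setting the matrix $A_{\tau'}$ collapses to the single quantity
\[
a_{11}^{\tau'}(x)=\tau'\comi{\partial_x V(x)}^{4/5}\inner{\partial_x V(x)}^2-\partial_x^2 V(x)+\tau',
\]
so the positive-definiteness hypothesis of Theorem~\ref{+Hypo} reduces to a scalar inequality that should follow from the limit hypothesis by an elementary split into a compact region and its complement.

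To select $\tau'$, I would first invoke the limit assumption to pick $R>0$ so large that $\tau\abs{\partial_x V(x)}^{14/5}-\Delta_x V(x)\geq 0$ whenever $\abs x\geq R$, and set $M=\sup_{\abs x\leq R}\abs{\Delta_x V(x)}$, which is finite by the $C^2$ regularity of $V$. Taking $\tau'\geq \max\{\tau,M+1\}$, on the compact ball one gets $a_{11}^{\tau'}(x)\geq \tau'-M>0$, while outside the ball the inequality $\comi{\partial_x V}\geq \abs{\partial_x V}$ gives
\[
a_{11}^{\tau'}(x)\geq \tau\abs{\partial_x V(x)}^{14/5}-\Delta_x V(x)+\tau'>0.
\]
Hence $A_{\tau'}$ is everywhere positive-definite and Theorem~\ref{+Hypo} applies, yielding the a priori estimate \reff{+A1} for this value of the parameter.

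It then remains to verify the divergence condition $\abs{\partial_x V(x)}+\abs{a_{11}^{\tau'}(x)}\to+\infty$ as $\abs x\to+\infty$ that drives the compact-resolvent conclusion of Theorem~\ref{+Hypo}. I would argue along an arbitrary sequence $x_k\to\infty$: if $\abs{\partial_x V(x_k)}$ stays bounded, then so does $\tau'\comi{\partial_x V(x_k)}^{4/5}\inner{\partial_x V(x_k)}^2$, and the assumed divergence of $\tau\abs{\partial_x V}^{14/5}-\Delta_x V$ forces $-\Delta_x V(x_k)\to+\infty$, which in turn makes $a_{11}^{\tau'}(x_k)\to+\infty$; otherwise $\abs{\partial_x V(x_k)}$ itself is unbounded along a subsequence. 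Either way the required divergence at infinity is secured, and Theorem~\ref{+Hypo} closes the argument. I do not anticipate any serious technical obstacle, since the content of the corollary is essentially a scalar algebraic reformulation of Theorem~\ref{+Hypo} specialised to the case $n=1$; the only mild point to be careful about is enforcing positivity of $a_{11}^{\tau'}$ on the compact region, which is handled by the inflation of $\tau$ described above.
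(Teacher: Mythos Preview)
Your approach is exactly the one the paper intends: the corollary is stated there as an immediate consequence of Theorem~\ref{+Hypo}, and your reduction --- inflating $\tau$ to some $\tau'\ge\max\{\tau,M+1\}$ so that the scalar $a_{11}^{\tau'}$ is strictly positive on a compact ball, and using $\comi{\partial_xV}^{4/5}(\partial_xV)^2\ge|\partial_xV|^{14/5}$ outside --- is precisely the content of that remark.

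One small tightening: your sequential dichotomy for the divergence condition is slightly loose (in the ``otherwise'' branch you only get unboundedness along a subsequence, which by itself does not yield $|\partial_xV(x_k)|+|a_{11}^{\tau'}(x_k)|\to\infty$ along the full sequence). The cleanest fix is to note that once you have shown $a_{11}^{\tau'}(x)>0$ everywhere and, for $|x|\ge R$,
\[
a_{11}^{\tau'}(x)\ \ge\ \tau|\partial_xV(x)|^{14/5}-\Delta_xV(x)+\tau',
\]
then $|a_{11}^{\tau'}(x)|=a_{11}^{\tau'}(x)\to+\infty$ directly by the hypothesis, so the divergence condition of Theorem~\ref{+Hypo} holds without any case analysis.
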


\begin{remark}
In  the special case  when $n=1$,   using  Corollary \ref{+++++++corone++} and  the  necessity part in Helffer-Nier Conjecture  (c.f. \cite[Theorem 1.1]{HelfferNier05}),  we can also improve   the criteria \eqref{tv} for Witten Laplacian,   by  allowing $t$  to  range over $[0,+\infty[$ instead of $]0,2[$   and  relaxing  the index $2$ there by $14/5$.	
\end{remark}

\begin{remark}
The hypotheses  in Theorem \ref{mtm+} and Theorem \ref{+Hypo},  are related to  the sign of the eigenvalues of the  Hessian matrix  $\inner{\partial_{x_ix_j}V}_{1\leq i,j\leq n}.$  In fact these assumptions are obviously  fulfilled when the Hessian matrix is negative-semidefinite.    When the Hessian matrix is positive-semidefinite  or  indefinite,  we requires that   the   {\bf positive} eigenvalues of Hessian matrix,  instead of  all the second derivatives in the condition \eqref{+HyP},  are dominated by   $\comi{\partial_x V}^{4/3}$. 
Now look back at the aforementioned potential  $V=\pm x_1^2x_2^2$, and it is clear that these hypotheses are fulfilled by    $V=- x_1^2x_2^2$  and  violated by    $V=x_1^2x_2^2$. 
\end{remark}

\begin{remark}
In \cite{HelfferNier05,Hel-Nou},  the authors introduced a compactness criterion for Witten Laplacian with polynomial potential $V$,  based on the group theory.  And it is also natural and  interesting to expect the similar  group theoretical compactness  criteria  for Fokker-Planck operator.     Now  consider such  a  potential,   not necessary to be a  polynomial,   that the matrix 
  \begin{eqnarray*}
\tilde A_\tau(x)=\inner{\tilde a_{ij}^{\tau}(x)}_{1\leq i, j\leq n}	,\quad \tilde a_{ij}^{\tau}(x)=\tau \abs{\partial_xV(x)}^{4/5}\inner{\partial_{x_i}V(x)}\inner{\partial_{x_j}V(x)}- \partial_{x_ix_j}V(x)
 \end{eqnarray*}
 is positive-semidefinite for some $\tau\geq 0.$   This condition is slightly stronger than the one in Theorem \ref{+Hypo},  and it  yields
 \begin{eqnarray*}
 	-\Delta_x V(x)+\tau \abs{\partial_x V(x)}^{14/5}\geq 0. 
 \end{eqnarray*}  
 Thus repeating the arguments used to prove maximum principle for elliptic equations, we see $V$ 
 doesn't have local minimum in $\mathbb R_x^n,$ except the constant-valued potentials.   So this kind of microlocal  property is imposed directly on the potential  rather than its "limiting polynomials" in the sense of \cite{HelfferNier05, Hel-Nou}.  
 \end{remark}

 Due to the lack of  estimates on the higher derivatives of $V$,  we can't follow the global symbolic calculus to prove our results,  although this method is efficiently explored to investigate the hypoellipticity and the compact resolvent  of Fokker-Planck operator (c.f. \cite{HelfferNier05, HerauNier04}).   Instead  we will use a multiplier method inspired by N. Lerner (see for instance \cite{Hormander85, MR2599384} and references therein),  which is based on the Poisson bracket analysis for the real and imaginary parts of the Fokker-Planck operator.        
 We hope this method not only applies to analyze the weighted estimate and the compact resolvent,   but also may 
 give insights on the sign conditions to investigate the subellipticity  (see \cite{Bouchut02, chenlixu, HelfferNier05, HerauNier04, lipisa, Nier-GFP} for instance) of Fokker-Planck operator.

 We end up the introduction  by  mentioning  that as a diffusive models,    the study of Fokker-Planck equation is  of independent interest in  kinetic theory and nonequilibrium statistical physics.   Here one of the basic problems is to analyze the large time behavior of solutions  to the time-dependent Fokker-Planck equation and prove  that these solutions converge  exponentially towards the equilibrium as $t $ goes to  $+\infty$.    Various approaches, such as hypoellipticity,  hypocoercivity, entropy method and so on,   are developed to study this problem, and  satisfactory results  are achieved.  We refer to \cite{DesvVillani01, HelfferNier05, Herau15, Herau07, Herau06, HerauNier04, lipisa,  villani}  and   references therein for more detail and \cite{AH} for the spectral analysis on the  non-selfadjoint Schr\"odinger operators with compact resolvent.   Finally we  remark that in order to study the   exponentially trend problem,  an efficient method  is  to  investigate the spectral gap, which is usually reduced to  analyze the compactness of resolvent.  On the other hand,   when the Fokker-Planck operator has an essential spectrum,   only polynomial convergence rate is expected, see the recent work \cite{wang15} for the study  on short-range potentials.

 \bigskip
\noindent{\bf Acknowledgements.}  We are grateful to the referee for the valuable suggestions which helped the author correct several mistakes.       Part of the work was done when the author visited The Institute of Mathematical Sciences,  The Chinese University of Hong Kong,  and he would like to thank their support and  hospitality.  We also appreciate the support from NSF of China(11422106) and Fok Ying Tung Education Foundation (151001).

\section{Proof the main results}\label{sec2}
We  firstly list some notations and facts used throughout the paper.   The proofs of the main results, Theorem \ref{mtm+} and Theorem \ref{+Hypo}, are presented in Subsection \ref{sec22}  and   Subsection \ref{sub21}, where  two multipliers $\mathcal M$ and $\mathcal K$ (see Lemma \ref{prpkey} and Lemma \ref{prpkey+} below) are introduced respectively.  This kind of multiplier method is inspired by N. Lerner \cite{Lernerpersonal}, and here it means that we have  to  choose carefully  an    operator  $M$ ({\it multiplier})  which is bounded  and self-adjoint in $L^2$ space, such that 
\begin{eqnarray*}
	{\rm Re}\, \Big<\big(y\cdot\partial_x-\partial_x V(x)\cdot
  \partial_y\big)u,~M u\Big>_{L^2}
\end{eqnarray*}
has  a good lower bound (weighted estimate here) on one side, and on the other side,   
\begin{eqnarray*}
	\abs{ \Big<\inner{-\Delta_y+ \abs y^2/4-n/2}u,~M u\Big>_{L^2}} 
\end{eqnarray*}
is    bounded from above by $\norm{Pu}_{L^2}+\norm{u}_{L^2}$.   
The multipliers chosen here are motivated by the Poisson bracket analysis for the real and imaginary parts of  symbol for the  Fokker-Planck operator.    Precisely,  if we denoted by $[Q_1,~Q_2]$ the commutator between  two operators $Q_1$ and $Q_2$, which is defined by 
\begin{eqnarray*}
	\com{Q_1,~Q_2}=Q_1Q_2-Q_2Q_1, 
\end{eqnarray*}
and also   use the notation that     
$X_0=y \cdot \partial_x-\partial_x V(x)\cdot
 \partial_y$ and $X_j=\partial_{y_j}+\frac{y_j}{2},j=1,\cdots n,$  
 then
 we can rewrite the Fokker-Planck operator $P$ define in \reff{FP++} as
\begin{equation*} 
  P=X_0+\sum_{j=1}^nX_j^*X_j,
\end{equation*}
and moreover we compute    
\begin{eqnarray}\label{com+first}
	\Big[ X_0,~ \sum_{j=1}^nX_j^*X_j\Big]=-\frac{1}{2} \partial_x V\cdot y+2\partial_{x}\cdot\partial_y
\end{eqnarray} 
and
\begin{eqnarray*}
	\Big[X_0,~ \Big[ X_0,~ \sum_{j=1}^nX_j^*X_j\Big]\,\Big]=-2\Delta_x+\frac{1}{2}\abs{\partial_xV}^2 -\frac{1}{2} \sum_{1\leq i,j\leq n}\inner{\partial_{x_ix_j} V} y_iy_j+2\sum_{1\leq i,j\leq n}\inner{\partial_{x_ix_j} V} \partial_{y_i}\partial_{y_j}.
\end{eqnarray*} 
Thus   the properties of subelliptic and weighted estimates in $x$ variable can be deduced  from the commutator above if some kind of conditions (negative semi-definite for instance) are imposed on the Hessian matrix $\inner{\partial_{x_ix_j}V}_{1\leq i,j\leq n}$.  This suggests that the multipliers  $\mathcal M$ and $\mathcal K$    here (see Lemma \ref{prpkey} and Lemma \ref{prpkey+} below)  should be chosen  through the first-commutator analysis in \eqref{com+first}.   In this work we will investigate only the  weighted estimate and thus the essential part in the multipliers is  the term $\partial_x V\cdot y$ in \eqref{com+first}. Moreover it seems reasonable that the term $\partial_x \cdot \partial_y$ in \eqref{com+first} shouldn't  be involved  in the multipliers, since it corresponds to the hypoellipticity and thus more estimates on the higher derivatives of $V$ are required rather than the ones of second order. 
  We refer to \cite{ahl, hlkyto} for the multipliers  introduced to deduce the hypoellipticity of kinetic operators.

Next we will give some estimates to be used frequently.  Observe $X_0$ is an anti-selfadjoint operator in $L^2$ and thus it is clear that 
\begin{equation}\label{revel}
\forall~u\in C_0^\infty(\mathbb R^{2n}),\quad 	\sum_{j=1} ^n\norm{ X_j u}_{L^2}\leq \norm{ \comi y u}_{L^2}+ \sum_{1\leq j\leq n}\norm{ \partial_{y_j} u}_{L^2}\leq C \inner{ \norm{ P u}_{L^2}+\norm{u}_{L^2}}.
\end{equation}
   We will use  the following result which is just a  consequence of  H\"ormander's bracket condition (cf. \cite{B0CaNo} for instance),  i.e.,    a constant $C$ exists  such that for any  vector-valued function $\theta(x)=(\theta_1(x),\cdots, \theta_n(x))$ of $x$ variable and for any $v \in  C_0^\infty(\mathbb R^{n}_y)$, we have
 \begin{eqnarray} \label{comest}
\norm{\abs{\theta_k(x)}^{1/2}v}_{L^2(\mathbb R_y^n)}  \leq C \Big( \norm{ (\theta(x) \cdot y)  v}_{L^2(\mathbb R_y^n)}+\norm{\partial_{y_k} v}_{L^2(\mathbb R_y^n)}\Big).
 \end{eqnarray}

\subsection{Proof of Theorem \ref{mtm+}}\label{sec22} 

We prove in this subsection  Theorem \ref{mtm+}.  To do so we begin with the following estimate  which holds for quite general potential.  

\begin{lemma}\label{prpkey}
Let $V(x)\in C^2\inner{\mathbb R^n}$. Then for all $\sigma\in\, ]0,1[,$ there exists a constant $C_\sigma>0$ such that for any $u\in C_0^\infty(\mathbb R^{2n})$  we have
 	\begin{eqnarray*}
  	&& \sigma \Big<\frac{(1+\abs y^8) \comi{\partial_x
  V(x)}^2  }{\rho ^3} \frac{y^2}{1+y^2}u,~u\Big>_{L^2}+ \Big<\frac{\rho }{\comi y^4}u,~u\Big>_{L^2}  \\
  &&\qquad\qquad\qquad - \sum_{1\leq i,j\leq n}\Big<\frac{(1+\abs y^8) \inner{\partial_{x_ix_j}V} y_iy_j}{\rho ^3}\frac{y^2}{1+y^2}u,~u\Big>_{L^2}\\
  &\leq&  C_\sigma \inner{ \norm{ P u}_{L^2}^2+\norm{u}_{L^2}^2}, 
  \end{eqnarray*}
 where the function $\rho\in C^1(\mathbb R^{2n})$ is defined by
 \begin{eqnarray*}
	\rho =\rho (x,y)=\inner{1+\abs y^{8}+\abs{\partial_xV(x)\cdot y}^2}^{1\over 2}.
\end{eqnarray*}
\end{lemma}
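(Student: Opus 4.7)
The plan is to implement the multiplier strategy outlined in the preamble. I will exhibit a real-valued, bounded function $m=m(x,y)$ on $\mathbb{R}^{2n}$ and analyse $\mathrm{Re}\,\langle Pu,mu\rangle_{L^2}$ two ways. Since $m$ is bounded, Cauchy--Schwarz immediately gives $|\langle Pu,mu\rangle_{L^2}|\le C(\|Pu\|_{L^2}^2+\|u\|_{L^2}^2)$. Writing $P=X_0+\sum_{j=1}^n X_j^*X_j$ and using that $X_0$ is antisymmetric while $m$ is real,
$$\mathrm{Re}\,\langle X_0 u,mu\rangle_{L^2}=-\tfrac12\int (X_0m)|u|^2\,dx\,dy;$$
on the other hand, integration by parts in $y$ yields
$$\mathrm{Re}\,\Big\langle \sum_j X_j^*X_j u,mu\Big\rangle_{L^2}=\int m\sum_j|X_j u|^2\,dx\,dy+\mathrm{Re}\sum_j\int (X_j u)(\partial_{y_j}m)\bar u\,dx\,dy,$$
which, thanks to the bound \eqref{revel} on $\sum_j\|X_j u\|_{L^2}^2$, the boundedness of $m$, and Cauchy--Schwarz with a small parameter, is controlled by $C(\|Pu\|_{L^2}^2+\|u\|_{L^2}^2)+\tfrac12\sum_j\|(\partial_{y_j}m)u\|_{L^2}^2$. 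Thus everything reduces to choosing $m$ so that $-\tfrac12 X_0 m$ reproduces the left-hand side of the lemma, modulo a contribution of size $\sum_j\|(\partial_{y_j}m)u\|_{L^2}^2$ that can be absorbed.

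Motivated by the commutator identity \eqref{com+first}, whose dominant contribution is $-\tfrac12(\partial_xV)\cdot y$, and by the structure of $\rho^2=1+|y|^8+(\partial_xV\cdot y)^2$, the natural candidate is
$$m(x,y)=\phi(y)\,\frac{\partial_xV(x)\cdot y}{\rho(x,y)^3},\qquad \phi(y)=(1+|y|^8)\,\frac{|y|^2}{1+|y|^2},$$
which is uniformly bounded since $|\partial_xV\cdot y|\le\rho$ and $(1+|y|^8)|y|^2\le\rho^2\langle y\rangle^2$. The key elementary identity is
$$X_0(\partial_xV\cdot y)=\sum_{i,j}(\partial_{x_ix_j}V)y_iy_j-|\partial_xV|^2,$$
together with the analogous formula for $X_0\rho$ obtained from $X_0\rho^2=-8|y|^6(\partial_xV\cdot y)+2(\partial_xV\cdot y)X_0(\partial_xV\cdot y)$. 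Expanding $-\tfrac12 X_0 m$ should produce, as its two principal contributions, the positive integrand $+\tfrac12\phi|\partial_xV|^2/\rho^3$ and the Hessian integrand $-\tfrac12\phi\sum_{ij}(\partial_{x_ix_j}V)y_iy_j/\rho^3$, matching in sign and shape the first and third target integrands. The residual terms, coming from differentiating $\rho^3$ and from $\partial_xV\cdot\partial_y\phi$, should — after using $|\partial_xV\cdot y|\le\rho$ and $1+|y|^8\le\rho^2$ — collapse to (a multiple of) the middle integrand $\rho/\langle y\rangle^4$.

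The main obstacle is the bookkeeping: the rational dependence $1/\rho^3$ generates many cross-terms when $X_0$ and $\partial_{y_j}$ are applied to $m$, and each must be either identified with one of the three target integrands or dominated by them via elementary inequalities involving $\rho$. The most delicate point is to verify that $\sum_j\|(\partial_{y_j}m)u\|_{L^2}^2$ is bounded by a small multiple of $\big\langle \phi|\partial_xV|^2\rho^{-3}\tfrac{|y|^2}{1+|y|^2}u,u\big\rangle_{L^2}$ plus harmless terms controlled by \eqref{revel}; this absorption is precisely what forces the specific exponents $8$ and $3$ in the definitions of $\rho$ and $m$, and what produces the freedom $\sigma\in(0,1)$ in the statement (one keeps the fraction $\sigma$ of the $|\partial_xV|^2$-integrand and spends $1-\sigma$ on absorbing the $\partial_{y_j}m$ contributions).
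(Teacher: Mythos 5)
Your overall framework---a bounded real multiplier $m$, the identity $\mathrm{Re}\,\langle X_0u,mu\rangle_{L^2}=-\tfrac12\int (X_0m)\abs u^2$, control of the $\sum_jX_j^*X_ju$ contribution through $[\,m,X_j]=\partial_{y_j}m$ and \eqref{revel}, and absorption with a small parameter producing the factor $\sigma$---is exactly the paper's strategy. The gap is in the multiplier itself: you take $m=\phi(y)\,\dfrac{\partial_xV\cdot y}{\rho^3}$ with $\phi=(1+\abs y^8)\tfrac{y^2}{1+y^2}$, whereas the proof requires $\mathcal M=\dfrac{2\,\partial_xV\cdot y}{\rho}\,\dfrac{y^2}{1+y^2}$, i.e.\ the weight $\rho^{-1}$, \emph{not} $\rho^{-3}$. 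The point is that the $\rho^{-3}$ appearing in the lemma is generated by the differentiation, via the algebraic cancellation $\rho^2-(\partial_xV\cdot y)^2=1+\abs y^8$: computing $-X_0\big(\tfrac{\partial_xV\cdot y}{\rho}\big)$ one finds the principal part
\begin{equation*}
\frac{\big(\abs{\partial_xV}^2-\sum_{i,j}(\partial_{x_ix_j}V)y_iy_j\big)\big(\rho^2-(\partial_xV\cdot y)^2\big)}{\rho^{3}}
=\frac{\big(\abs{\partial_xV}^2-\sum_{i,j}(\partial_{x_ix_j}V)y_iy_j\big)(1+\abs y^{8})}{\rho^{3}},
\end{equation*}
which has exactly the nonnegative weight $(1+\abs y^8)/\rho^3$ demanded by the statement, plus a remainder $-4\abs y^6(\partial_xV\cdot y)^2/\rho^3$ bounded by $C\comi y^2$ and hence harmless by \eqref{revel}.

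If instead $\rho^{-3}$ sits inside the multiplier, the same computation gives the principal part
$\tfrac{\phi}{2\rho^{3}}\big(\abs{\partial_xV}^2-\sum_{i,j}(\partial_{x_ix_j}V)y_iy_j\big)\cdot\tfrac{1+\abs y^{8}-2(\partial_xV\cdot y)^2}{\rho^{2}}$,
and the extra factor $\tfrac{1+\abs y^{8}-2(\partial_xV\cdot y)^2}{\rho^{2}}$ changes sign (it is negative wherever $2(\partial_xV\cdot y)^2>1+\abs y^8$). Consequently your two claimed ``principal contributions'' are not produced: the positive $\abs{\partial_xV}^2$-term loses positivity in that region, and the Hessian term appears with a weight differing from the lemma's by a sign-indefinite factor; the discrepancy is itself a Hessian term with an uncontrollable coefficient, which cannot be absorbed for a general $C^2$ potential (the lemma assumes nothing on the sign or size of $\partial^2_xV$). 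The middle term also fails: in the paper it comes from $\tfrac{2(\partial_xV\cdot y)^2}{\rho(1+y^2)^2}\geq \tfrac{2\rho}{(1+y^2)^2}-C$, while with your $m$ the corresponding positive term carries the damping factor $(1+\abs y^8)/\rho^2\leq1$, so $\rho/\comi y^4$ cannot be recovered when $\abs{\partial_xV\cdot y}^2\gg 1+\abs y^8$. Replacing your $m$ by the paper's $\mathcal M$ (with $\rho^{-1}$) and keeping the rest of your outline yields the proof.
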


\begin{proof} To simplify the notations we will use $C$  in the proof to denote different constants, and similarly  use $C_\eps$ to denote different constants depending on $\eps.$     This lemma is to be proven by the multiplier method.  Firstly we introduce a multiplier $\mathcal M$, which is a $C^1(\mathbb R^{2n})$ function defined by 
  \begin{eqnarray*}
	\mathcal M=\mathcal M (x,y)= \frac{2 \partial_xV(x)\cdot y }{\rho (x,y)  } \frac{y^2}{1+y^2},
\end{eqnarray*} 
with  $\rho$  given in Lemma \ref{prpkey}. 	
Recall $X_0=y \cdot \partial_x-\partial_x V(x)\cdot
 \partial_y$.  Then  using the relation
\begin{eqnarray*}
	\frac{1}{2}\big[\mathcal M ,~X_0\big]=\frac{ \partial_xV(x)\cdot y }{\rho (x,y)}\Big[ \frac{y^2}{1+y^2},~X_0\Big]+ \Big[ \frac{ \partial_xV(x)\cdot y }{\rho (x,y)},~X_0 \Big]\frac{y^2}{1+y^2},
\end{eqnarray*}
we calculate
\begin{eqnarray*}
	\frac{1}{2}\big[\mathcal M ,~X_0\big]
	&=&\frac{2\abs{\partial_xV(x)\cdot y}^2}{\rho (x,y)(1+y^2)^2}+  \frac{  (1+\abs y^8) \abs{\partial_x
  V(x)}^2  }{\rho ^3} \frac{y^2}{1+y^2}\\
 &&-  \frac{  (1+\abs y^8) \sum_{1\leq i,j\leq n}\inner{\partial_{x_ix_j}V} y_iy_j}{\rho ^3}\frac{y^2}{1+y^2}\\
 &&- \frac{  4\abs{\partial_{x}V(x)\cdot y}^2 \abs y^6}{\rho ^3}\frac{y^2}{1+y^2} .
\end{eqnarray*}
As a result,  observe 
\begin{eqnarray*}
	{\rm Re}\comi{X_0 u,~\mathcal M u}_{L^2}=
  \frac{1}{2}\comi{\com{\mathcal M ,~X_0}u,~u}_{L^2},
\end{eqnarray*}
and thus
 \begin{eqnarray*}
  {\rm Re}\comi{X_0 u,~\mathcal M u}_{L^2}&=&
  \Big<\frac{(1+\abs y^8) \abs{\partial_x
  V(x)}^2 }{\rho ^3} \frac{y^2}{1+y^2}u,~u\Big>_{L^2}+ \Big<\frac{2\abs{\partial_xV(x)\cdot y}^2}{\rho (x,y)(1+y^2)^2}u,~u\Big>_{L^2}\\
  &&- \Big<\frac{(1+\abs y^8)  \sum_{1\leq i,j\leq n}\inner{\partial_{x_ix_j}V} y_iy_j}{\rho ^3}\frac{y^2}{1+y^2}u,~u\Big>_{L^2}\\
  &&- \Big<\frac{  4\abs{\partial_{x}V(x)\cdot y}^2 \abs y^6}{\rho ^3}\frac{y^2}{1+y^2}u,~u\Big>_{L^2}.\end{eqnarray*}
  This,  along with the inequalities  that
  \begin{eqnarray*}
  	\Big<\frac{2\rho }{(1+y^2)^2}u,~u\Big>_{L^2}&=&	 \Big<\frac{2\abs{\partial_xV(x)\cdot y}^2}{\rho (x,y)(1+y^2)^2}u,~u\Big>_{L^2}+ \Big<\frac{2(1+\abs y^8)}{\rho (x,y)(1+y^2)^2}u,~u\Big>_{L^2}\\
  	&\leq &	 \Big<\frac{2\abs{\partial_xV(x)\cdot y}^2}{\rho (x,y)(1+y^2)^2}u,~u\Big>_{L^2}+ C \norm{  u}_{L^2}^2
  \end{eqnarray*}
  and
  \begin{eqnarray*}
  	\abs{\Big<\frac{  4\abs{\partial_{x}V(x)\cdot y}^2 \abs y^6}{\rho ^3}\frac{y^2}{1+y^2}u,~u\Big>_{L^2}} \leq C \norm{  \comi y u}_{L^2}^2\leq C \Big(\norm{  P u}_{L^2}^2+\norm{ u}_{L^2}^2\Big), 
  \end{eqnarray*} 
  yields
    \begin{eqnarray}\label{est1+++}
 && \Big<\frac{ (1+\abs y^8) \abs{\partial_x
  V(x)}^2 }{\rho ^3} \frac{y^2}{1+y^2}u,~u\Big>_{L^2}+ \Big<\frac{2\rho }{(1+y^2)^2}u,~u\Big>_{L^2}\nonumber \\
  &&\qquad\qquad- \Big<\frac{(1+\abs y^8)  \sum_{1\leq i,j\leq n}\inner{\partial_{x_ix_j}V} y_iy_j}{\rho ^3}\frac{y^2}{1+y^2}u,~u\Big>_{L^2}\\
  &\leq&  {\rm Re}\comi{X_0 u,~\mathcal M u}_{L^2}+ C\inner{ \norm{ P u}_{L^2}^2+\norm{u}_{L^2}^2}.  \nonumber 
  \end{eqnarray}
On the other hand,  since $\mathcal M \in L^\infty(\mathbb R^{2n})$ with $\norm{\mathcal M }_{L^\infty}\leq 2$ then
 it is easy to see 
 \begin{equation}\label{pumu}
 	\abs{{\rm Re}\comi{P u,~\mathcal M u}_{L^2}}\leq 
  \norm{Pu}_{L^2}^2+ \norm{u}_{L^2}^2.
 \end{equation} 
Recall $X_j=\partial_{y_j}+\frac{y_j}{2},j=1,\cdots n.$ Then direct computation gives,    for each $1\leq j\leq n$, \begin{eqnarray*}
 \com{\mathcal M ,~X_j}&=&-\frac{2\inner{1+\abs y^8} \inner{\partial_{x_j} V(x) } }{\rho ^3 }\frac{y^2}{(1+y^2)}+\frac{8\inner{\partial_{x} V(x) \cdot y }\abs y^6 y_j}{\rho  ^3}\frac{y^2}{(1+y^2)}\\
 &&-\frac{4\inner{\partial_{x} V(x) \cdot y }y_j}{\rho(x,y)  (1+y^2)^2}.
\end{eqnarray*} 
Thus, for any $\eps>0,$
\begin{eqnarray*}
	&&\abs{ {\rm Re} \comi{ X_j u,~\com{\mathcal M ,~ X_j} u}_{L^2}	} \\
	&\leq &    \eps \norm{\rho^{-3}(1+y^2)^{-1}   (1+\abs y^8)y^2 \abs{\partial_{x_j} V(x) } u}_{L^2}^2	  +C_ \eps \inner{ \norm{X_j u}_{L^2}^2+\norm{u}_{L^2}^2}\\	
	&\leq &\eps  \Big<\frac{ (1+\abs y^8)\abs{\partial_{x_j}
  V(x)}^2 }{\rho ^3} \frac{y^2}{1+y^2}u,~u\Big>_{L^2}+C_ \eps \inner{ \norm{ X_j u}_{L^2}^2+\norm{u}_{L^2}^2}.
\end{eqnarray*}
This gives, using  $\norm{\mathcal M }_{L^\infty}\leq 2$ and \eqref{revel},  
\begin{eqnarray*}
\abs{ {\rm Re} \comi{X_j^*X_j u,~\mathcal M   u}_{L^2}	} &\leq & \abs{{\rm Re} \comi{ X_j u,~\mathcal M  X_j u}_{L^2}	}+ \abs{ {\rm Re} \comi{ X_j u,~\com{\mathcal M ,~ X_j} u}_{L^2}	}\\
&\leq &\eps  \Big<\frac{ (1+\abs y^8) \abs{\partial_{x_j}
  V(x)}^2 }{\rho ^3} \frac{y^2}{1+y^2}u,~u\Big>_{L^2}+C_ \eps \inner{ \norm{ X_j u}_{L^2}^2+\norm{u}_{L^2}^2}\\
&\leq& \eps  \Big<\frac{ (1+\abs y^8)\abs{\partial_{x_j}
  V(x)}^2 }{\rho ^3} \frac{y^2}{1+y^2}u,~u\Big>_{L^2}+C_ \eps \inner{ \norm{ P u}_{L^2}^2+\norm{u}_{L^2}^2}.
\end{eqnarray*}
Then
\begin{equation*}
	\sum_{j=1}^n \abs{ {\rm Re} \comi{X_j^*X_j u,~\mathcal M   u}_{L^2}	}\leq  \eps  \Big<\frac{(1+\abs y^8) \abs{\partial_{x}
  V(x)}^2  }{\rho ^3} \frac{y^2}{1+y^2}u,~u\Big>_{L^2}+C_\eps \inner{ \norm{ P u}_{L^2}^2+\norm{u}_{L^2}^2}.
  \end{equation*}
  Consequently,  from \eqref{est1+++},  \eqref{pumu}  and the relationship  
\begin{eqnarray*}
	 {\rm Re} \comi{X_0u,~\mathcal M   u}_{L^2}= {\rm Re} \comi{P u,~\mathcal M   u}_{L^2}- {\rm Re}\sum_{j=1}^n\comi{X_j^*X_j u,~\mathcal M   u}_{L^2}, 
\end{eqnarray*} 
it follows that 
  \begin{eqnarray*}
  	&& \Big<\frac{ (1+\abs y^8) \abs{\partial_x
  V(x)}^2 }{\rho ^3} \frac{y^2}{1+y^2}u,~u\Big>_{L^2}+ \Big<\frac{2\rho }{(1+y^2)^2}u,~u\Big>_{L^2}  \\
  &&\qquad\qquad- \Big<\frac{(1+\abs y^8)  \sum_{1\leq i,j\leq n}\inner{\partial_{x_ix_j}V} y_iy_j}{\rho ^3}\frac{y^2}{1+y^2}u,~u\Big>_{L^2}\\
  &\leq& \eps  \Big<\frac{(1+\abs y^8) \abs{\partial_{x}
  V(x)}^2}{\rho ^3} \frac{y^2}{1+y^2}u,~u\Big>_{L^2}+C_\eps \inner{ \norm{ P u}_{L^2}^2+\norm{u}_{L^2}^2}.   
  \end{eqnarray*}
  As the result, for all $\sigma$ with $0<\sigma<1,$ letting $\eps=1-\sigma$  gives the desired estimate in Lemma \ref{prpkey}.  The proof is thus complete. 
  \end{proof}

The rest  part is devoted to the proof of  Theorem \ref{mtm+}.

  \begin{proof}[Proof of  Theorem \ref{mtm+}]
For the symmetric Hessian matrix $\inner{\partial_{x_ix_j} V}_{1\leq i, j\leq  n}$, we can find  a $n\times n$   orthogonal matrix $Q(x)=\big(q_{ij}(x)\big)_{1\leq i,j\leq n}$ such that  
\begin{eqnarray}\label{diag}
	Q^T\begin{pmatrix}
		\lambda_1\\
		&\lambda_2\\
		&&\ddots\\
		&&&\lambda_n
	\end{pmatrix} Q=\inner{\partial_{x_ix_j}V}_{1\leq i,j\leq n},
\end{eqnarray}
where $\lambda_j, 1\leq j\leq n$ are the eigenvalues of  the Hessian $\inner{\partial_{x_ix_j} V}_{1\leq i, j\leq  n}.$
Then  for any $x\in \mathbb R^n$ we  can write 
  \begin{eqnarray}\label{matrix}
  -\sum_{j\notin I_x}\lambda_j (x)\com{ \inner{Q(x)y}_j}^2=	-\sum_{1\leq i,j\leq n}\inner{\partial_{x_ix_j}V(x)} y_iy_j+\sum_{j\in I_x}\lambda_j(x) \com{ \inner{Q(x)y}_j}^2,
  \end{eqnarray}
  where  $\inner{Q(x)y}_j$ stands for the $j$-th component of  the vector $Q(x)y$, and      
  \begin{eqnarray*}
  	I_x= \Big\{1\leq \ell \leq n;~\lambda_\ell(x)>0\Big\}. 
  \end{eqnarray*}
  Thus it follows from \eqref{matrix}  and the assumption \eqref{assext} that ,  for any $x\in\mathbb R^n$, 
   \begin{eqnarray*}
  \sum_{j\notin I_x}\inner{-\lambda_j (x)} \com{ \inner{Q(x)y}_j}^2\leq -\sum_{1\leq i,j\leq n}\inner{\partial_{x_ix_j}V(x)} y_iy_j   + C \comi{\partial_x
  V(x)}^{4/3} \abs{y}^2.
  \end{eqnarray*}
  This together with the estimate in  Lemma \ref{prpkey} yields, for all $0<\sigma<1$ and  for  any  $\eps>0, $
 \begin{eqnarray*}
  	&& \sigma \Big<\frac{ (1+\abs y^8) \comi{\partial_x
  V(x)}^2}{\rho ^3} \frac{y^2}{1+y^2}u,~u\Big>_{L^2}+ \Big<\frac{\rho }{\comi y^4}u,~u\Big>_{L^2}  \\
  &&\qquad\qquad + \int_{\mathbb R^n}   \bigg(\sum_{j\notin I_x} \int_{\mathbb R^n}  \frac{(1+\abs y^8) \inner{-\lambda_j (x)}\com{ \inner{Q(x)y}_j}^2    }{\rho ^3}\frac{y^2}{1+y^2}u^2\, dy\bigg) \,dx\\
  &\leq&  C \Big<\frac{(1+\abs y^8)   \comi{\partial_x
  V(x)}^{4/3} \abs{y}^2 }{\rho ^3}\frac{y^2}{1+y^2}u,~u\Big>_{L^2}+ C_\sigma \inner{ \norm{ P u}_{L^2}^2+\norm{u}_{L^2}^2}\\
  &\leq&  \eps  \Big<\frac{ (1+\abs y^8) \comi{\partial_x
  V(x)}^{2}    }{\rho ^3}\frac{y^2}{1+y^2}u,~u\Big>_{L^2}+C_\eps \Big<\frac{(1+\abs y^8)   \comi y^6 }{\rho ^3}\frac{y^2}{1+y^2}u,~u\Big>_{L^2}\\
  &&+ C_\sigma \inner{ \norm{ P u}_{L^2}^2+\norm{u}_{L^2}^2}\\
  &\leq&  \eps  \Big<\frac{(1+\abs y^8)   \comi{\partial_x
  V(x)}^{2}  }{\rho ^3}\frac{y^2}{1+y^2}u,~u\Big>_{L^2}+  C _{\eps,\sigma} \inner{ \norm{ P u}_{L^2}^2+\norm{u}_{L^2}^2}, 
  \end{eqnarray*}
  the second and last inequalities  holding because  
  \begin{eqnarray*}
  	\comi{\partial_x
  V(x)}^{4/3} \abs{y}^2\leq \eps  \comi{\partial_x
  V(x)}^{2} +C_\eps \abs y^{6} 
  \end{eqnarray*}
  and 
  \begin{eqnarray*}
  	\Big<\frac{(1+\abs y^8)   \comi y^6 }{\rho ^3}\frac{y^2}{1+y^2}u,~u\Big>_{L^2}\leq C\norm{\comi y u}_{L^2}^2\leq C \inner{ \norm{ P u}_{L^2}^2+\norm{u}_{L^2}^2}.
  \end{eqnarray*}
  Now letting $\eps=\frac{\sigma}{2}$, we obtain 
  \begin{eqnarray*}
  		&& \frac{\sigma}{2}\Big<\frac{  (1+\abs y^8)\comi{\partial_x
  V(x)}^2 }{\rho ^3} \frac{y^2}{1+y^2}u,~u\Big>_{L^2}+ \Big<\frac{\rho }{\comi y^4}u,~u\Big>_{L^2}  \\
   &&\qquad\qquad + \int_{\mathbb R^n}  \bigg(\sum_{j\notin I_x} \int_{\mathbb R^n}  \frac{(1+\abs y^8) \inner{-\lambda_j (x)}\com{ \inner{Q(x)y}_j}^2    }{\rho ^3}\frac{y^2}{1+y^2}u^2\, dy\bigg) \,dx\\
  &\leq&   C_\sigma \inner{ \norm{ P u}_{L^2}^2+\norm{u}_{L^2}^2},
  \end{eqnarray*}
  and thus, choosing $\sigma=1/2,$
    \begin{eqnarray}\label{uplo}
  		&& \Big<\frac{ (1+\abs y^8) \comi{\partial_x
  V(x)}^2}{\rho ^3} \frac{y^2}{1+y^2}u,~u\Big>_{L^2}+ \Big<\frac{\rho }{\comi y^4}u,~u\Big>_{L^2}\nonumber  \\
   &&\qquad\qquad + \int_{\mathbb R^n}    \bigg(\sum_{j\notin I_x} \int_{\mathbb R^n}  \frac{  \inner{-\lambda_j (x)}\com{ \inner{Q(x)y}_j}^2  y^2    }{\comi{\partial_x V(x)}^{3} \comi y^{6}} u^2\, dy\bigg)\,dx \\
  &\leq&   C \inner{ \norm{ P u}_{L^2}^2+\norm{u}_{L^2}^2}\nonumber
  \end{eqnarray}
  due to the fact that  $-\lambda_j(x)\geq 0$ for $j\notin I_x$ and 
      \begin{eqnarray*}
  	\frac{1}{\comi{\partial_x V(x)}^{3} \comi y^{6}}\leq  C  \frac{(1+\abs y^8)}{\rho ^3}\frac{1}{1+y^2}.
  \end{eqnarray*}
  In the following discussions we will give  the lower bound of the summation on the left side of \eqref{uplo}.  To do so, we use the 
 the estimates
  \begin{eqnarray*}
	\comi{\partial_x V(x)}^{1\over 4}  \frac{y^2}{1+y^2} &\leq& \inner{\frac{1}{8} \frac{ \comi{\partial_x
  V(x)}^2\comi y^8 }{\rho ^3}  + \frac{3}{8}   \frac{\rho}{\comi y^4}+\frac{\comi y}{2}}  \frac{y^2}{1+y^2} \\
  &\leq&   C \frac{ \comi{\partial_x
  V(x)}^2 (1+\abs y^8)}{ \rho^3}  \frac{y^2}{1+y^2}   +  \frac{\rho}{\comi y^4} +\comi y,\end{eqnarray*}
together with \eqref{uplo},   to conclude 
  \begin{eqnarray}\label{1over8}
  	   \sum_{j=1}^n \norm{\comi{\partial_x V(x)}^{1\over 8} y_j \comi{y}^{-1}  u}_{L^2}^2  \leq   C \inner{ \norm{ P u}_{L^2}^2+\norm{\comi y^{1/2}u}_{L^2}^2}\leq   C \inner{ \norm{ P u}_{L^2}^2+\norm{ u}_{L^2}^2}.
  \end{eqnarray}
  Moreover applying  \eqref{comest} with $v=\comi{y}^{-1}  u$ and $\theta(x)\cdot y=\comi{\partial_x V(x)}^{1\over 8}y_j,$ we get 
\begin{eqnarray*}
		  \norm{\comi{\partial_x V(x)}^{1\over 16}  \comi{y}^{-1}  u}_{L^2}^2 &\leq&  C  \inner{\norm{\comi{\partial_x V(x)}^{1\over 8} y_j \comi{y}^{-1}  u}_{L^2}^2+ \norm{\partial_{ y_j} \comi{y}^{-1}  u}_{L^2}^2}\\
		  &\leq&  C  \inner{\norm{\comi{\partial_x V(x)}^{1\over 8} y_j \comi{y}^{-1}  u}_{L^2}^2+ \norm{\partial_{ y_j}  u}_{L^2}^2+\norm{ u}_{L^2}^2}\\ 
		  &\leq&  C  \inner{  \norm{P u}_{L^2}^2+\norm{ u}_{L^2}^2},
		  \end{eqnarray*}
		  the last inequality following from \eqref{1over8} and \eqref{revel}.    As a result, observe
		  \begin{eqnarray*}
		  	 &&\norm{\comi{\partial_x V(x)}^{1\over 16}  u}_{L^2}^2\\
		  	 &=&\inner{\comi{\partial_x V(x)}^{1\over 16} \frac{1}{1+\abs{y}^2} u,  ~\comi{\partial_x V(x)}^{1\over 16}  u}_{L^2}+\sum_{j=1}^n\inner{\comi{\partial_x V(x)}^{1\over 16} \frac{y_j^2}{1+\abs{y}^2} u,  ~\comi{\partial_x V(x)}^{1\over 16}  u}_{L^2}\\
		  	 &=& \norm{\comi{\partial_x V(x)}^{1\over 16}  \comi{y}^{-1}  u}_{L^2}^2+\sum_{j=1}^n\norm{\comi{\partial_x V(x)}^{1\over 16} y_j \comi{y}^{-1}  u}_{L^2}^2 \\
		  	 &\leq& \norm{\comi{\partial_x V(x)}^{1\over 16}  \comi{y}^{-1}  u}_{L^2}^2+\sum_{j=1}^n\norm{\comi{\partial_x V(x)}^{1\over 8} y_j \comi{y}^{-1}  u}_{L^2}^2,
		  \end{eqnarray*}
		  and thus combining the above inequalities and \eqref{1over8},  we obtain
\begin{eqnarray}\label{estimate+}
	 \norm{\comi{\partial_x V(x)}^{1\over 16}  u}_{L^2}^2 		   \leq   C  \inner{  \norm{P u}_{L^2}^2+\norm{ u}_{L^2}^2}.
\end{eqnarray}
Then the conclusion (i) in Theorem \ref{mtm+} follows.  

Now we prove the conclusion (ii).  Let  $x\in\mathbb R^n $ be given and let  $1\leq i, \ell \leq n$ and   $j\notin I_x.$   Recall $Q(x)=\big(q_{k\ell}(x)\big)_{1\leq k,\ell\leq n}.$    
Similarly as above,  applying again  \eqref{comest} with 
\begin{eqnarray*}
	v= y_\ell \comi{y}^{-3}   u,\quad \theta(x)\cdot y=\inner{-\lambda_j (x)}^{1/2} \comi{\partial_xV(x)}^{-{3\over 2}} \inner{Q(x)y}_j=\inner{-\lambda_j (x)}^{1/2} \comi{\partial_xV(x)}^{-{3\over 2}} \sum_{k=1}^n q_{jk}y_k,
\end{eqnarray*}
we have,  
\begin{eqnarray*}
		  &&  \norm{\inner{-\lambda_j (x)}^{1/4} \comi{\partial_xV(x)}^{-3/4}\abs{q_{ji}(x)}^{1/2}y_\ell  \comi{y}^{-3}    u}_{L^2(\mathbb R_y^n)}^2 \\
		  &\leq&  C    \inner{\norm{\inner{-\lambda_j (x)}^{1/2} \comi{\partial_xV(x)}^{-3/2} \com{\inner{Q(x)y}_j } y_\ell \comi{y}^{-3}   u}_{L^2(\mathbb R_y^n)}^2+ \norm{ \partial_{y_i} y_\ell \comi{y}^{-3}  u}_{L^2(\mathbb R_y^n)}^2}\\
		   &\leq&   C  \int_{\mathbb R^n}  \frac{   \inner{-\lambda_j (x)}\com{ \inner{Q(x)y}_j}^2  y^2  }{\comi{\partial_x V(x)}^{3} \comi y^{6}} u^2\, dy+C \inner{ \norm{ \partial_{y_i}     u}_{L^2(\mathbb R_y^n)}^2+\norm{u}_{L^2(\mathbb R_y^n)}^2}.
		  \end{eqnarray*}
Thus, combining  \eqref{uplo} and \eqref{revel}, 
\begin{eqnarray}\label{eiglow}
		  && \int_{\mathbb R^n }    \inner{\sum_{i=1}^n\sum_{j\notin I_x} \norm{\inner{-\lambda_j (x)}^{1/4} \comi{\partial_xV(x)}^{-3/4}\abs{q_{ji}(x)}^{1/2} y_\ell  \comi{y}^{-3}   u}_{L^2(\mathbb R_y^n)}^2}\,dx\\
		  &\leq&  C \int_{\mathbb R^n}    \bigg(\sum_{j\notin I_x} \int_{\mathbb R^n}  \frac{   \inner{-\lambda_j (x)}\com{ \inner{Q(x)y}_j}^2  y^2  }{\comi{\partial_x V(x)}^{3} \comi y^{6}} u^2\, dy\bigg)\,dx +C  \sum_{i=1}^n \inner{  \norm{ \partial_{y_i}     u}_{L^2}^2+\norm{u}_{L^2}^2}\nonumber \\
  &\leq&   C \inner{ \norm{ P u}_{L^2}^2+\norm{u}_{L^2}^2}.\nonumber
\end{eqnarray}
Moreover, using again   \eqref{comest} with 
\begin{eqnarray*}
	v= \comi{y}^{-3}   u,\quad \theta\cdot y=\inner{-\lambda_j (x)}^{1/4} \comi{\partial_xV(x)}^{-3/4}  \abs{q_{ji}(x)}^{1/2} y_\ell, 
\end{eqnarray*}
gives, 
\begin{eqnarray}\label{+eiglow}
		 &&\int_{\mathbb R^n }    \inner{\sum_{i=1}^n\sum_{j\notin I_x} \norm{  \inner{-\lambda_j (x)}^{1/8}\comi{\partial_xV(x)}^{-3/8}\abs{q_{ji}(x)}^{1/4}   \comi{y}^{-3}  u}_{L^2(\mathbb R_y^n)} }\,dx\nonumber \\
		  &\leq &  \int_{\mathbb R^n }    \inner{\sum_{i=1}^n\sum_{j\notin I_x} \norm{\inner{-\lambda_j (x)}^{1/4} \comi{\partial_xV(x)}^{-3/4}\abs{q_{ji}(x)}^{1/2} y_\ell  \comi{y}^{-3}   u}_{L^2(\mathbb R_y^n)}^2}\,dx+C     \norm{ \partial_{y_\ell }  \comi{y}^{-3}  u}_{L^2}^2 \nonumber\\
  &\leq&   C \inner{ \norm{ P u}_{L^2}^2+\norm{u}_{L^2}^2}.
\end{eqnarray}
the last inequality following from \eqref{eiglow} and \eqref{revel}.  On the other hand,  in view of \eqref{diag}    we see 
\begin{eqnarray}\label{trace}
	 \sum_{i=1}^n\sum_{j=1}^n\lambda_j(x) \inner{q_{ji}(x)}^2=\sum_{j=1}^n\lambda_j(x)= \Delta_x V(x). 
\end{eqnarray}
Then,   by the assumption \eqref{longtimebe} in  Theorem \ref{mtm+},    we can find    a constant $C_\alpha$ depending on $\alpha$, such that 
 \begin{eqnarray*}
	\forall~x\in \mathbb R^n,\quad 0\leq  \alpha \abs{ \partial_x V(x)}^2-\Delta_x V(x)+C_\alpha \leq C_\alpha \comi{ \partial_x V(x)}^2+\sum_{i=1}^n\sum_{j\notin I_x}^n \inner{-\lambda_j(x)} \abs{q_{ji}(x)}^2, 
\end{eqnarray*}
the last inequality  following from \eqref{trace}.   And  thus for any $x\in\mathbb R^n,$
\begin{eqnarray*}
	\abs{ \alpha \abs{ \partial_x V(x)}^2-\Delta_x V(x)}^{1/4} \leq C    \comi{ \partial_x V(x)}^{1/2}+C \sum_{i=1}^n\sum_{j\notin I_x}^n \inner{-\lambda_j(x)}^{1/4} \abs{q_{ji}(x)}^{1/2},
\end{eqnarray*}
which, together with \eqref{+eiglow},     yields
 \begin{eqnarray*}
 	 &&  \Big<\frac{\big|\alpha \abs{ \partial_x V(x)}^2-\Delta_x V(x)\big|^{1\over 4}}{ \comi{\partial_xV(x)}^{3/4}\comi{y}^{6} }  u,~u \Big>_{L^2}  \\
 	 &\leq & C \Big<\frac{  \comi{ \partial_x V(x)}^{1/2}}{ \comi{\partial_xV(x)}^{3/4}\comi{y}^{6} }  u,~u \Big>_{L^2}+C\int_{\mathbb R^n }    \inner{\sum_{i=1}^n\sum_{j\notin I_x} \Big<\frac{\inner{-\lambda_j (x)}^{1/4}\abs{q_{ji}(x)}^{1/2}}{ \comi{\partial_xV(x)}^{3/4}\comi{y}^{6} }  u,~u \Big>_{L^2(\mathbb R_y^n)} }\,dx\\
		  &\leq&  C \inner{ \norm{ P u}_{L^2}^2+\norm{u}_{L^2}^2}.
 \end{eqnarray*}
 As a result,  we conclude,  combining \eqref{estimate+} , \eqref{revel} and the above inequality,  
 \begin{eqnarray*}
 	 \norm{|\alpha \abs{ \partial_x V(x)}^2-\Delta_x V(x)\big|^{1/ 80}  u }_{L^2}^2 
		  \leq C \inner{ \norm{ P u}_{L^2}^2+\norm{u}_{L^2}^2},
 \end{eqnarray*}
 due to the estimate
  \begin{eqnarray*}
 	|\alpha \abs{ \partial_x V(x)}^2-\Delta_x V(x)\big|^{1/ 40}\leq \frac{1}{10} \frac{\big|\alpha \abs{ \partial_x V(x)}^2-\Delta_x V(x)\big|^{1\over 4}}{ \comi{\partial_xV(x)}^{3/4}\comi{y}^{6} }+\frac{3}{5}  \comi{\partial_xV(x)}^{1/8}+\frac{3}{10}  \comi{y}^{2}. 
 \end{eqnarray*}
 Thus the proof  of Theorem \ref{mtm+} is complete.  
    \end{proof}

\bigskip

\subsection{Proof of Theorem \ref{+Hypo}} \label{sub21} This subsection is devoted to proving Theorem \ref{+Hypo}.  Similarly as Lemma \ref{prpkey} we have the following 

\begin{lemma}\label{prpkey+}
Let $V(x)\in C^2\inner{\mathbb R^n}$. Then for all $\sigma \in\,]0,1[$ there exists a constant $C_\sigma>0$ such that for any $u\in C_0^\infty(\mathbb R^{2n})$  we have
 	\begin{eqnarray*}
 	\begin{aligned}
  	& \sigma\Big<\frac{ \comi{\partial_x
  V(x)}^2  }{\comi{\partial_xV(x)\cdot y }^3} \frac{y^2}{1+y^2}u,~u\Big>_{L^2}+ \Big<\frac{2\comi{\partial_xV(x)\cdot y }}{\comi y^4}u,~u\Big>_{L^2}  - \sum_{1\leq i,j\leq n}\Big<\frac{ \inner{\partial_{x_ix_j}V} y_iy_j}{\comi{\partial_xV(x)\cdot y }^3}\frac{y^2}{1+y^2}u,~u\Big>_{L^2}\\
  &\leq  C_\sigma \inner{ \norm{ P u}_{L^2}^2+\norm{u}_{L^2}^2}.
  \end{aligned}
  \end{eqnarray*}
\end{lemma}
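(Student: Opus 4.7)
The plan is to mimic the proof of Lemma \ref{prpkey}, replacing the weight $\rho=\bigl(1+|y|^{8}+|\partial_xV\cdot y|^2\bigr)^{1/2}$ by the simpler weight $r:=\comi{\partial_xV\cdot y}$. Concretely, I would introduce the bounded self-adjoint multiplier
\begin{eqnarray*}
  \mathcal K=\mathcal K(x,y)=\frac{2\,\partial_xV(x)\cdot y}{\comi{\partial_xV(x)\cdot y}}\,\frac{y^{2}}{1+y^{2}},
\end{eqnarray*}
which satisfies $\|\mathcal K\|_{L^\infty}\leq 2$. Decomposing $P=X_0+\sum_j X_j^*X_j$, I would write $\mathrm{Re}\comi{X_0u,\mathcal Ku}_{L^2}=\tfrac12\comi{[\mathcal K,X_0]u,u}_{L^2}=-\tfrac12\comi{X_0(\mathcal K)u,u}_{L^2}$, and then compute the commutator explicitly.

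For the commutator with $X_0$, the useful identities are $X_0(\partial_xV\cdot y)=\sum_{i,j}(\partial_{x_ix_j}V)y_iy_j-|\partial_xV|^{2}$ and, crucially, $r^{2}-(\partial_xV\cdot y)^{2}=1$. Together with $X_0\bigl(\tfrac{y^2}{1+y^2}\bigr)=-\tfrac{2\,\partial_xV\cdot y}{(1+y^2)^2}$, these give
\begin{eqnarray*}
  \tfrac12[\mathcal K,X_0]=\frac{|\partial_xV|^{2}-\sum_{i,j}(\partial_{x_ix_j}V)y_iy_j}{r^{3}}\,\frac{y^{2}}{1+y^{2}}+\frac{2(\partial_xV\cdot y)^{2}}{r\,(1+y^{2})^{2}}.
\end{eqnarray*}
Using $\tfrac{2(\partial_xV\cdot y)^{2}}{r(1+y^2)^2}=\tfrac{2r}{(1+y^2)^2}-\tfrac{2}{r(1+y^2)^2}$ and $|\partial_xV|^2=\comi{\partial_xV}^2-1$, I can replace the two factors on the left by the quantities appearing in the statement ($\tfrac{\comi{\partial_xV}^2}{r^3}\tfrac{y^2}{1+y^2}$ and $\tfrac{2r}{\comi y^4}$), at the cost of manifestly bounded remainder terms absorbed into $C\|u\|_{L^2}^{2}$.

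For the $X_j$-terms, a direct computation (using $\partial_{y_j}(\partial_xV\cdot y)=\partial_{x_j}V$ and $\partial_{y_j}r=\tfrac{(\partial_xV\cdot y)\partial_{x_j}V}{r}$, together again with $r^2-(\partial_xV\cdot y)^2=1$) yields the clean expression
\begin{eqnarray*}
  [\mathcal K,X_j]=-\frac{2\,\partial_{x_j}V}{r^{3}}\,\frac{y^{2}}{1+y^{2}}-\frac{4\,(\partial_xV\cdot y)\,y_{j}}{r\,(1+y^{2})^{2}}.
\end{eqnarray*}
The second summand is uniformly bounded (since $|\partial_xV\cdot y|/r\leq1$ and $|y_j|/(1+y^2)^2\leq1$), while for the first I would use Cauchy--Schwarz together with $\tfrac{1}{r^{3}}\tfrac{y^{2}}{1+y^{2}}\leq1$ to obtain
\begin{eqnarray*}
  \bigl|\mathrm{Re}\comi{X_ju,[\mathcal K,X_j]u}_{L^2}\bigr|\leq\eps\Bigl\langle\tfrac{|\partial_{x_j}V|^{2}}{r^{3}}\tfrac{y^{2}}{1+y^{2}}u,u\Bigr\rangle_{L^2}+C_\eps\bigl(\|X_ju\|_{L^2}^{2}+\|u\|_{L^2}^{2}\bigr).
\end{eqnarray*}
Combined with \eqref{revel}, this controls $|\mathrm{Re}\comi{X_j^{*}X_ju,\mathcal Ku}_{L^2}|$ by $\eps$ times the positive term plus $C_\eps(\|Pu\|_{L^2}^{2}+\|u\|_{L^2}^{2})$. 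Finally, $|\mathrm{Re}\comi{Pu,\mathcal Ku}_{L^2}|\leq\|Pu\|_{L^2}^{2}+\|u\|_{L^2}^{2}$ from the $L^{\infty}$-bound on $\mathcal K$. Assembling these ingredients in $\mathrm{Re}\comi{X_0u,\mathcal Ku}_{L^2}=\mathrm{Re}\comi{Pu,\mathcal Ku}_{L^2}-\sum_j\mathrm{Re}\comi{X_j^{*}X_ju,\mathcal Ku}_{L^2}$ and choosing $\eps=1-\sigma$ yields the desired inequality.

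The main technical point is not any single step but rather choosing the correct weight $r=\comi{\partial_xV\cdot y}$ so that the algebraic identity $r^{2}-(\partial_xV\cdot y)^{2}=1$ produces three cancellations at once: it makes $\tfrac12[\mathcal K,X_0]$ exhibit the clean Hessian term $\sum(\partial_{x_ix_j}V)y_iy_j/r^{3}$ with the correct sign, it reduces the $\partial_{y_j}\mathcal K$ computation from a cubic expression to the linear $2\partial_{x_j}V/r^{3}$, and it gives the positivity factor $\comi{\partial_xV}^{2}/r^{3}$ rather than a weaker one. Verifying that all the remainder terms arising from $\comi{\partial_xV}^{2}=|\partial_xV|^{2}+1$, from the difference $\tfrac{2(\partial_xV\cdot y)^2}{r(1+y^2)^2}-\tfrac{2r}{\comi y^4}$, and from the second commutator $[\mathcal K,X_j]$ really are controlled by $\|Pu\|_{L^{2}}^{2}+\|u\|_{L^{2}}^{2}$ (via \eqref{revel}) is the routine but book-keeping-heavy part of the argument.
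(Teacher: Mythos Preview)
Your proposal is correct and follows essentially the same approach as the paper: the same multiplier $\mathcal K$, the same commutator computations (the paper obtains exactly your expressions for $\tfrac12[\mathcal K,X_0]$ and $[\mathcal K,X_j]$), the same $\eps$-absorption of the $X_j^*X_j$ contributions via \eqref{revel}, and the final choice $\eps=1-\sigma$. If anything, your explicit emphasis on the identity $r^{2}-(\partial_xV\cdot y)^{2}=1$ makes the algebra slightly more transparent than in the paper, where this simplification is used without comment.
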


\begin{proof} The proof is quite similar as  Lemma \ref{prpkey}. 
  Let  $\mathcal K \in C^1(\mathbb R^{2n})$  be  defined  by
\begin{eqnarray*}
	\mathcal K=\mathcal K (x,y)= \frac{2 \partial_xV(x)\cdot y }{\comi{\partial_xV(x)\cdot y} } \frac{y^2}{1+y^2}.
\end{eqnarray*} 	
Then  using the relation
\begin{eqnarray*}
	\frac{1}{2}\big[\mathcal K ,~X_0\big]=\frac{\partial_xV(x)\cdot y }{\comi{\partial_xV(x)\cdot y}  }\Big[ \frac{y^2}{(1+y^2)},~X_0\Big]+ \Big[ \frac{\partial_xV(x)\cdot y }{\comi{\partial_xV(x)\cdot y} },~X_0 \Big]\frac{y^2}{(1+y^2)},
\end{eqnarray*}
we obtain
\begin{eqnarray*}
	\frac{1}{2}\big[\mathcal K ,~X_0\big]
	&=&\frac{2\abs{\partial_xV(x)\cdot y}^2}{\comi{\partial_xV(x)\cdot y} (1+y^2)^2}+  \inner{\frac{ \abs{\partial_x
  V(x)}^2 }{\comi{\partial_xV(x)\cdot y} ^3} -  \sum_{1\leq i,j\leq n}\frac{\inner{\partial_{x_ix_j}V} y_iy_j}{\comi{\partial_xV(x)\cdot y} ^3}}\frac{y^2}{1+y^2}.
\end{eqnarray*}
Thus using the relationship 
\begin{eqnarray*}
	{\rm Re}\comi{X_0 u,~\mathcal K u}_{L^2}=
  \frac{1}{2}\comi{\com{\mathcal K ,~X_0}u,~u}_{L^2},
\end{eqnarray*}
 we conclude
    \begin{eqnarray}\label{est23}
 && \Big<\frac{ \comi{\partial_x
  V(x)}^2 }{\comi{\partial_xV(x)\cdot y}  ^3} \frac{y^2}{1+y^2}u,~u\Big>_{L^2}+ \Big<\frac{2\comi{\partial_xV(x)\cdot y}}{ (1+y^2)^2}u,~u\Big>_{L^2}\nonumber \\
  &&\qquad\qquad- \sum_{1\leq i,j\leq n}\Big<\frac{   \inner{\partial_{x_ix_j}V} y_iy_j}{\comi{\partial_xV(x)\cdot y}^3}\frac{y^2}{1+y^2}u,~u\Big>_{L^2}\\
  &\leq & {\rm Re}\comi{X_0 u,~\mathcal K u}_{L^2}+3\norm{u}_{L^2}^2.\nonumber
   \end{eqnarray}
On the other hand,  since $\mathcal K \in L^\infty(\mathbb R^{2n})$ with $\norm{\mathcal K }_{L^\infty}\leq 2$ then
 it is easy to see 
 \begin{equation}\label{+++pumu+}
 	\abs{{\rm Re}\comi{P u,~\mathcal K u}_{L^2}}\leq 
  \norm{Pu}_{L^2}^2+ \norm{u}_{L^2}^2.
 \end{equation} 
Recall $X_j=\partial_{y_j}+\frac{y_j}{2},j=1,\cdots n.$ Then direct computation gives,    for each $1\leq j\leq n$, 
\begin{eqnarray*}
 \com{\mathcal K ,~X_j}&=&-\frac{2\partial_{x_j} V(x) }{\comi{\partial_xV(x)\cdot y }^3 }\frac{y^2}{1+y^2}-\frac{4\inner{\partial_{x} V(x) \cdot y }y_j}{\comi{\partial_xV(x)\cdot y } (1+y^2)^2}.
 \end{eqnarray*} 
As a result,  for any $\eps>0,$  
\begin{eqnarray*}
	&&\abs{ {\rm Re} \comi{ X_j u,~\com{\mathcal K ,~ X_j} u}_{L^2}	} \\
	&\leq &    \eps \norm{\comi{\partial_xV(x)\cdot y }^{-3}  (1+y^2)^{-1}y^2\abs{\partial_{x_j} V(x) } u}_{L^2}^2	  +C_ \eps \inner{ \norm{ X_j u}_{L^2}^2+\norm{u}_{L^2}^2}\\	
	&\leq &\eps  \Big<\frac{ \abs{\partial_{x_j}
  V(x)}^2 }{\comi{\partial_xV(x)\cdot y }^3} \frac{y^2}{1+y^2}u,~u\Big>_{L^2}+C_ \eps \inner{ \norm{ X_j u}_{L^2}^2+\norm{u}_{L^2}^2}.
\end{eqnarray*}
This yields, using again the facts that $\norm{\mathcal K }_{L^\infty}\leq 2$ and \eqref{revel},  
\begin{eqnarray*}
\sum_{j=1}^n \abs{ {\rm Re} \comi{X_j^*X_j u,~\mathcal K   u}_{L^2}	} &\leq & \sum_{j=1}^n\inner{\abs{{\rm Re} \comi{ X_j u,~\mathcal K  X_j u}_{L^2}	}+ \abs{ {\rm Re} \comi{ X_j u,~\com{\mathcal K ,~ X_j} u}_{L^2}	}}\\
&\leq& \eps  \Big<\frac{ \abs{\partial_{x}
  V(x)}^2 }{\comi{\partial_xV(x)\cdot y }^3} \frac{y^2}{1+y^2}u,~u\Big>_{L^2}+C_ \eps \inner{ \norm{ P u}_{L^2}^2+\norm{u}_{L^2}^2}, 
\end{eqnarray*}
which,   along with  \eqref{est23},  \eqref{+++pumu+}  and the relationship that 
\begin{eqnarray*}
	 {\rm Re} \comi{X_0u,~\mathcal K   u}_{L^2}= {\rm Re} \comi{P u,~\mathcal K   u}_{L^2}- {\rm Re}\sum_{j=1}^n\comi{X_j^*X_j u,~\mathcal K   u}_{L^2}, 
\end{eqnarray*} 
implies 
  \begin{eqnarray*}
  	&& \Big<\frac{ \comi{\partial_x
  V(x)}^2 }{\comi{\partial_xV(x)\cdot y }^3} \frac{y^2}{1+y^2}u,~u\Big>_{L^2}+ \Big<\frac{2\comi{\partial_xV(x)\cdot y } }{(1+y^2)^2}u,~u\Big>_{L^2}  \\
  &&\qquad\qquad-\sum_{1\leq i,j\leq n}\Big<\frac{ \inner{\partial_{x_ix_j}V} y_iy_j}{\comi{\partial_xV(x)\cdot y }^3}\frac{y^2}{1+y^2}u,~u\Big>_{L^2}\\
  &\leq& \eps  \Big<\frac{ \abs{\partial_{x}
  V(x)}^2}{\comi{\partial_xV(x)\cdot y }^3} \frac{y^2}{1+y^2}u,~u\Big>_{L^2}+C_\eps \inner{ \norm{ P u}_{L^2}^2+\norm{u}_{L^2}^2}.   
  \end{eqnarray*}
 Given any $\sigma\in\,]0,1[,$ letting $\eps=1-\sigma$ gives the desired estimate in Lemma \ref{prpkey+}. The proof is thus complete. 
  \end{proof}
  
   \begin{lemma} \label{lemm+2}
  Let $\tau\geq 0$ be given.   Then for any $\eps>0$, 
  \begin{eqnarray*}
  && \sum_{1\leq i,j\leq n} \tau\,\Big<\frac{  \comi{\partial_xV(x)}^{4/5}\inner{\partial_{x_i}V(x)} \inner{\partial_{x_j}V(x)} y_iy_j}{\comi{\partial_x V(x)\cdot y}^3}\frac{y^2}{1+y^2}u,~u\Big>_{L^2}\\
   &\leq&  \eps  \,\Big<\frac{  \comi{\partial_x
  V(x)}^{2}  }{\comi{\partial_x V(x)\cdot y}^3}\frac{y^2}{1+y^2}u,~u\Big>_{L^2}+\eps  \,\Big<\frac{  \comi{\partial_x V(x)\cdot y} }{\comi{y} ^4}u,~u\Big>_{L^2}+  C _{\eps,\tau} \inner{ \norm{ P u}_{L^2}^2+\norm{u}_{L^2}^2},
   \end{eqnarray*}
   where $C _{\eps,\tau}$ is a constant depending only on $\eps$ and $\tau$.  \end{lemma}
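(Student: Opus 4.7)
The plan is to reduce the bilinear sum to a single scalar weight, factor that weight pointwise as a product of the two ``good'' weights appearing on the right-hand side together with a third weight controlled by $\comi{y}^{2}$, and then apply the three-factor Young inequality.

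First, I would observe the algebraic identity
\begin{eqnarray*}
\sum_{1\leq i,j\leq n}\inner{\partial_{x_i}V(x)}\inner{\partial_{x_j}V(x)}y_iy_j = \inner{\partial_xV(x)\cdot y}^2 \leq \comi{\partial_xV(x)\cdot y}^2,
\end{eqnarray*}
so that the left-hand side of Lemma \ref{lemm+2} is dominated by
\begin{eqnarray*}
 \tau\,\Big<\frac{  \comi{\partial_xV(x)}^{4/5}}{\comi{\partial_x V(x)\cdot y}}\frac{y^2}{1+y^2}u,~u\Big>_{L^2}.
\end{eqnarray*}

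Next I would set up the pointwise factorisation. Writing
\begin{eqnarray*}
X=\frac{\comi{\partial_x V(x)}^2}{\comi{\partial_xV(x)\cdot y}^3}\frac{y^2}{1+y^2},\qquad Y=\frac{\comi{\partial_x V(x)\cdot y}}{\comi{y}^4},\qquad Z=\frac{\abs{y}^3}{\comi{y}},
\end{eqnarray*}
a direct computation, using $y^{2}/(1+y^{2})=\abs y^{2}/\comi y^{2}$, shows
\begin{eqnarray*}
\frac{\comi{\partial_x V(x)}^{4/5}}{\comi{\partial_x V(x)\cdot y}}\frac{y^2}{1+y^2}=X^{2/5}\, Y^{1/5}\, Z^{2/5},
\end{eqnarray*}
and moreover $Z\leq \comi{y}^{2}$. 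Since $\tfrac{2}{5}+\tfrac{1}{5}+\tfrac{2}{5}=1$, Young's inequality with exponents $(5/2,5,5/2)$ yields, for any $\delta>0$,
\begin{eqnarray*}
X^{2/5}Y^{1/5}Z^{2/5}\leq \delta X+\delta Y+C_\delta\, \comi{y}^{2}.
\end{eqnarray*}

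Integrating against $u^{2}$, multiplying by $\tau$, and choosing $\delta=\eps/\tau$ when $\tau>0$ (the case $\tau=0$ is trivial), I obtain
\begin{eqnarray*}
 \tau\,\Big<\frac{\comi{\partial_x V(x)}^{4/5}}{\comi{\partial_x V(x)\cdot y}}\frac{y^2}{1+y^2}u,~u\Big>_{L^2}
\leq \eps\,\Big<Xu,u\Big>_{L^2}+\eps\,\Big<Yu,u\Big>_{L^2}+C_{\eps,\tau}\,\norm{\comi{y}\,u}_{L^2}^2.
\end{eqnarray*}
Finally, the estimate \eqref{revel} gives $\norm{\comi{y}\,u}_{L^2}^{2}\leq C(\norm{Pu}_{L^2}^{2}+\norm{u}_{L^2}^{2})$, which concludes the proof.

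I do not expect a genuine obstacle: the content of the lemma is essentially the algebraic identity together with a three-term Young inequality. The only slightly non-obvious point is finding the exponents $(2/5,1/5,2/5)$ so that the residual weight is a pure polynomial in $y$ of degree $\leq 2$, which is exactly what \eqref{revel} is designed to absorb.
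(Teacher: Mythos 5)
Your proof is correct and takes essentially the same route as the paper: both reduce the double sum to a scalar weight via $\sum_{i,j}(\partial_{x_i}V)(\partial_{x_j}V)y_iy_j=(\partial_xV\cdot y)^2$ and then split that weight pointwise by Young's inequality into the two good terms plus a $\comi{y}^2$ remainder absorbed through \eqref{revel}. The only (cosmetic) difference is that the paper performs two successive two-term Young inequalities, namely $\comi{\partial_xV}^{4/5}\abs{\partial_xV\cdot y}^2\leq\eps\comi{\partial_xV}^2+C_{\eps,\tau}\abs{\partial_xV\cdot y}^{10/3}$ followed by $\comi{\partial_xV\cdot y}^{1/3}\leq\eps\,\comi{\partial_xV\cdot y}\comi{y}^{-4}+C_\eps\comi{y}^2$, whereas you package the same interpolation as a single three-factor Young step with exponents $(2/5,1/5,2/5)$.
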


\begin{proof} 
In the proof we use $C_{\eps,\tau}$ to denote the different constants depending on $\eps$ and $\tau$.   
Direct  calculation gives  
    \begin{eqnarray*}
  	\tau \frac{\comi{\partial_xV}^{4/5}\abs{\partial_x V(x)\cdot y}^2 }{\comi{\partial_x V(x)\cdot y}^3} \frac{\abs y^2}{1+\abs y^2} & \leq &  \frac{\eps \comi{\partial_xV(x)}^{2}+C_{\eps,\tau} \abs{\partial_x V(x)\cdot y}^{10/3
  	}}{\comi{\partial_x V(x)\cdot y}^3} \frac{\abs y^2}{1+\abs y^2} \\
  	& \leq &  \eps \,   \frac{\comi{\partial_xV(x)}^{2}}{\comi{\partial_x V(x)\cdot y}^3} \frac{\abs y^2}{1+\abs y^2}+  C_{\eps,\tau} \comi{\partial_x V(x)\cdot y}^{1/3
  	}\\
  	& \leq &    \eps \, \frac{\comi{\partial_xV(x)}^{2}}{\comi{\partial_x V(x)\cdot y}^3} \frac{\abs y^2}{1+\abs y^2}+  \eps\,\frac{ \comi{\partial_x V(x)\cdot y}}{\comi y^4}+ C_{\eps,\tau}\comi y^2,
  	  \end{eqnarray*}
and thus,  using \eqref{revel},
\begin{eqnarray*}
&&	\tau\,\Big<\frac{    \comi{\partial_xV(x)}^{4/5}\abs{\partial_x V(x)\cdot y}^2}{\comi{\partial_x V(x)\cdot y}^3}\frac{y^2}{1+y^2}u,~u\Big>_{L^2} \\
   &\leq&  \eps  \,\Big<\frac{  \comi{\partial_x
  V(x)}^{2}  }{ \comi{\partial_x V(x)\cdot y}^3}\frac{y^2}{1+y^2}u,~u\Big>_{L^2}+\eps  \,\Big<\frac{  \comi{\partial_x V(x)\cdot y} }{\comi{y} ^4}u,~u\Big>_{L^2}+  C _{\eps,\tau} \inner{ \norm{ P u}_{L^2}^2+\norm{u}_{L^2}^2}.
\end{eqnarray*}
 Then   observing 
\begin{eqnarray*}
  	     \sum_{1\leq i,j\leq n} \tau\,\Big<\frac{   \comi{\partial_xV}^{4/5}\inner{\partial_{x_i}V} \inner{\partial_{x_j}V} y_iy_j}{\rho ^3}\frac{y^2}{1+y^2}u,~u\Big>_{L^2}
  	     = \tau\,\Big<\frac{\comi{\partial_xV}^{4/5}\abs{\partial_x V\cdot y}^2}{\rho ^3}\frac{y^2}{1+y^2}u,~u\Big>_{L^2},
  \end{eqnarray*}
 the desired estimate in Lemma \ref{lemm+2} follows.  The proof is   complete. 
\end{proof}

The rest is occupied by  the proof   of Theorem \ref{+Hypo}. 

\begin{proof}[Proof of Theorem \ref{+Hypo}]
	  By virtue of Lemma \ref{prpkey+} and Lemma \ref{lemm+2},  we obtain, for all $0<\sigma<1$ and for any $\eps>0,$
	  \begin{eqnarray*}
	  \begin{aligned}
  	& \sigma \Big<\frac{ \comi{\partial_x
  V(x)}^2  }{\comi{\partial_xV(x)\cdot y }^3} \frac{y^2}{1+y^2}u,~u\Big>_{L^2}+ \Big<\frac{2\comi{\partial_xV(x)\cdot y }}{\comi y^4}u,~u\Big>_{L^2}\\
  &\quad+\sum_{1\leq i,j\leq n} \set{\tau\,\Big<\frac{  \comi{\partial_xV}^{4/5}\inner{\partial_{x_i}V} \inner{\partial_{x_j}V} y_iy_j}{\comi{\partial_x V\cdot y}^3}\frac{y^2}{1+y^2}u,~u\Big>_{L^2}  -  \Big<\frac{ \inner{\partial_{x_ix_j}V} y_iy_j}{\comi{\partial_xV(x)\cdot y }^3}\frac{y^2}{1+y^2}u,~u\Big>_{L^2}}\\
  &\leq   \eps  \,\Big<\frac{  \comi{\partial_x
  V(x)}^{2}  }{ \comi{\partial_x V(x)\cdot y}^3}\frac{y^2}{1+y^2}u,~u\Big>_{L^2}+\eps  \,\Big<\frac{  \comi{\partial_x V(x)\cdot y} }{\comi{y} ^4}u,~u\Big>_{L^2}+  C _{\eps,\tau,\sigma} \inner{ \norm{ P u}_{L^2}^2+\norm{u}_{L^2}^2}.
  \end{aligned}
  \end{eqnarray*}
  Letting $\eps=\sigma/2$,    denoting by $C$ the different constants which may depend on $\tau$ and $\sigma,$  we have
   \begin{eqnarray*}
   \begin{aligned}
  	&\frac{\sigma}{2} \Big<\frac{ \comi{\partial_x
  V(x)}^2  }{\comi{\partial_xV(x)\cdot y }^3} \frac{y^2}{1+y^2}u,~u\Big>_{L^2}+ \Big<\frac{\comi{\partial_xV(x)\cdot y }}{\comi y^4}u,~u\Big>_{L^2}\\
  &\quad+\sum_{1\leq i,j\leq n} \set{\tau\,\Big<\frac{  \comi{\partial_xV}^{4/5}\inner{\partial_{x_i}V} \inner{\partial_{x_j}V} y_iy_j}{\comi{\partial_x V\cdot y}^3}\frac{y^2}{1+y^2}u,~u\Big>_{L^2}  -  \Big<\frac{ \inner{\partial_{x_ix_j}V} y_iy_j}{\comi{\partial_xV(x)\cdot y }^3}\frac{y^2}{1+y^2}u,~u\Big>_{L^2}}\\
  &\leq    C \inner{ \norm{ P u}_{L^2}^2+\norm{u}_{L^2}^2},
  \end{aligned}
  \end{eqnarray*}
  and thus,  using \eqref{revel},  
   \begin{eqnarray}\label{mares}
   \begin{aligned}
  	&\frac{\sigma}{2}  \Big<\frac{ \comi{\partial_x
  V(x)}^2  }{\comi{\partial_xV(x)\cdot y }^3} \frac{y^2}{1+y^2}u,~u\Big>_{L^2}+ \Big<\frac{\comi{\partial_xV(x)\cdot y }}{\comi y^4}u,~u\Big>_{L^2} + \Big<\frac{  y^TA_\tau(x) y}{\comi{\partial_x V\cdot y}^3}\frac{y^2}{1+y^2}u,~u\Big>_{L^2}   \\
  &\leq     C \inner{ \norm{ P u}_{L^2}^2+\norm{u}_{L^2}^2},
  \end{aligned}
  \end{eqnarray}
where  $A_\tau(x)$ is the  matrix  defined in  Theorem \ref{+Hypo}, i.e.,  \begin{eqnarray*}
A_\tau(x)=\inner{a_{ij}^{\tau}(x)}_{1\leq i, j\leq n},\qquad a_{ij}^{\tau}(x)=\tau\comi{\partial_x V(x)}^{4\over 5} \inner{\partial_{x_i}V(x)}\inner{\partial_{x_j}V(x)}- \partial_{x_ix_j}V(x)+\tau \delta_{ij}.
 \end{eqnarray*}
Now under the assumption that  $A_\tau(x)$ is positive-definite, we can find  its Cholesky decomposition matrix $$B_\tau(x)=\inner{b_{ij}^\tau(x)}_{1\leq i, j\leq n},$$
 satisfying the relation
   \begin{eqnarray*}
  	  A_\tau=B_\tau^TB_\tau. 
  \end{eqnarray*}
 Then using  the following estimates
 \begin{eqnarray*}
 	  &&\sum_{\ell=1}^n\sum_{j=1}^n\Big\|\sum_{1\leq k\leq n} b_{jk}^\tau(x) \comi{\partial_x V(x)}^{-3/2} y_k  y_\ell  \comi{y}^{-5/2} u \Big\|_{L^2}^2\\
 	 &=&  \Big<\frac{   B_\tau(x) y}{\comi{\partial_x V(x)}^3\comi y^3}\frac{y^2}{1+y^2}u,~  B_\tau(x) y \,u\Big>_{L^2} \\
 	 &\leq & C  \Big<\frac{   B_\tau(x) y}{\comi{\partial_x V(x) \cdot y}^3}\frac{y^2}{1+y^2}u,~  B_\tau(x) y \,u\Big>_{L^2} 
 	 =  C  \Big<\frac{   y^TA_\tau(x) y}{\comi{\partial_x V(x) \cdot y}^3}\frac{y^2}{1+y^2}u,~  u\Big>_{L^2} 
 \end{eqnarray*}
 and \eqref{mares},  we have, letting $\sigma=1/2,$
 \begin{eqnarray}\label{esma++}
 		&& \Big<\frac{ \comi{\partial_x
  V(x)}^2  }{\comi{\partial_xV(x)\cdot y }^3} \frac{y^2}{1+y^2}u,~u\Big>_{L^2}+ \Big<\frac{\comi{\partial_xV(x)\cdot y }}{\comi y^4}u,~u\Big>_{L^2}\nonumber \\
  &&\qquad+ \sum_{\ell=1}^n\sum_{j=1}^n\Big\|\sum_{1\leq k\leq n} b_{jk}^\tau(x) \comi{\partial_x V(x)}^{-3/2} y_k  y_\ell  \comi{y}^{-5/2} u \Big\|_{L^2}^2 \\
  &&\leq     C \inner{ \norm{ P u}_{L^2}^2+\norm{u}_{L^2}^2}.\nonumber
 \end{eqnarray}
Moreover observe 
\begin{eqnarray*}
	&& \sum_{\ell=1}^n \norm{\comi{\partial_x V(x)}^{1\over 10} y_\ell \comi{y}^{-1} u }_{L^2}^2\\
	&\leq&   \Big<\frac{ \abs{\partial_x
  V(x)}^2  }{\comi{\partial_xV(x)\cdot y }^3} \frac{y^2}{1+y^2}u,~u\Big>_{L^2}+ \Big<\frac{\comi{\partial_xV(x)\cdot y }}{\comi y^4}u,~u\Big>_{L^2}+\norm{\comi y u}_{L^2}^2,
\end{eqnarray*}
due to the estimate
 \begin{eqnarray*}
	\comi{\partial_x V(x)}^{1\over 5}  \frac{y^2}{1+y^2} &\leq& \inner{\frac{1}{10} \frac{ \comi{\partial_x
  V(x)}^2 }{ \comi{\partial_xV(x)\cdot y}^3}  + \frac{9}{10}   \comi{\partial_x
  V(x) \cdot y}^{1/3}}  \frac{y^2}{1+y^2} \\
  &\leq&  \frac{ \comi{\partial_x
  V(x)}^2 }{ \comi{\partial_xV(x)\cdot y}^3}  \frac{y^2}{1+y^2}   +  \frac{\comi{\partial_x
  V(x) \cdot y} }{\comi y^{4}}+\comi y^2.
\end{eqnarray*}
Then  combining the above inequalities,     \eqref{revel} and \eqref{esma++},   we have  
\begin{eqnarray}\label{keyest+}
&&    \sum_{\ell=1}^n \norm{\comi{\partial_x V(x)}^{1\over 10} y_\ell \comi{y}^{-1} u }_{L^2}^2 \nonumber  \\
  &&   \qquad\qquad\qquad +\sum_{\ell=1}^n\sum_{j=1}^n \Big\|\sum_{1\leq k\leq n} b_{jk}^\tau(x) \comi{\partial_x V(x)}^{-3/2} y_k  y_\ell  \comi{y}^{-5/2} u \Big\|_{L^2}^2\\
  &\leq&   C\Big(\norm{P u}_{L^2}^2+ \norm{u}_{L^2}^2\Big).   \nonumber 	
\end{eqnarray}
In order to obtain a lower bound of the terms on the left hand side of \eqref{keyest+}, we will use  \eqref{comest} with 
\begin{eqnarray*}
	\theta \cdot y =\sum_{\ell=1}^n\comi{\partial_x V(x)}^{1\over 10} y_\ell,\quad v=\comi y^{-1} u;
\end{eqnarray*}
this implies  
\begin{eqnarray*}
   \norm{\comi{\partial_x V(x)}^{1\over 20}  \comi{y}^{-1} u }_{L^2} &\leq & C  \inner{ \norm{\sum_{\ell=1}^n \comi{\partial_x V(x)}^{1\over 10} y_\ell \comi{y}^{-1} u }_{L^2}+\norm{ \partial_{y_k} \comi{y}^{-1} u }_{L^2}}\\
   &\leq & C \sum_{\ell=1}^n   \norm{\comi{\partial_x V(x)}^{1\over 10} y_\ell \comi{y}^{-1} u }_{L^2}+C\Big(\norm{ Pu }_{L^2}+\norm{u }_{L^2}\Big).
\end{eqnarray*}
As a result,  it follows from the above inequalities and \eqref{keyest+} that 
\begin{eqnarray*}
&&     \norm{\comi{\partial_x V(x)}^{1\over 20}   \comi{y}^{-1} u }_{L^2}^2  +\sum_{\ell=1}^n\sum_{j=1}^n \Big\|\sum_{1\leq k\leq n} b_{jk}^\tau(x) \comi{\partial_x V(x)}^{-3/2} y_k  y_\ell  \comi{y}^{-5/2} u \Big\|_{L^2}^2\\
  &\leq&    C\Big(\norm{P u}_{L^2}^2+ \norm{u}_{L^2}^2\Big),
\end{eqnarray*}
and thus,  using again \eqref{keyest+} and repeating the arguments used to prove  \eqref{estimate+},
\begin{eqnarray}\label{+keyest+}
&&     \norm{\comi{\partial_x V(x)}^{1\over 20} u }_{L^2}^2   
 +\sum_{\ell=1}^n\sum_{j=1}^n \Big\|\sum_{1\leq k\leq n} b_{jk}^\tau(x) \comi{\partial_x V(x)}^{-3/2} y_k  y_\ell  \comi{y}^{-5/2} u \Big\|_{L^2}^2\nonumber 	\\
  &\leq&    C\Big(\norm{P u}_{L^2}^2+ \norm{u}_{L^2}^2\Big).   
\end{eqnarray}
Similarly,   for any $1\leq j, \ell\leq n$ we  use \eqref{comest} again with
\begin{eqnarray*}
	\theta \cdot y =\sum_{k=1}^n b_{jk}^\tau(x)\comi{\partial_x V(x)}^{-3/2}y_k, \quad v=y_\ell  \comi{y}^{-5/2} u, 
\end{eqnarray*}
 to obtain
 \begin{eqnarray*}
 &&\sum_{1\leq p\leq n}   \Big\|  \abs{b_{jp}^\tau(x)}^{1/2 } \comi{\partial_x V(x)}^{-3/4}  y_\ell  \comi{y}^{-5/2} u \Big\|_{L^2} \\
 &\leq & C\sum_{1\leq p\leq n} \Big( \Big\|\sum_{1\leq k\leq n} b_{jk}^\tau(x) \comi{\partial_x V(x)}^{-3/2} y_k  y_\ell  \comi{y}^{-5/2} u \Big\|_{L^2}
 +  \norm{\partial_{y_p} y_\ell   \comi{y}^{-5/2} u}_{L^2}\Big)\\
 &\leq & C  \Big\|\sum_{1\leq k\leq n} b_{jk}^\tau(x) \comi{\partial_x V(x)}^{-3/2} y_k  y_\ell  \comi{y}^{-5/2} u \Big\|_{L^2}
 +  C\sum_{1\leq p\leq n} \inner{\norm{\partial_{y_p}  u}_{L^2}+\norm{ u}_{L^2}}\\
 &\leq & C  \Big\|\sum_{1\leq k\leq n} b_{jk}^\tau(x) \comi{\partial_x V(x)}^{-3/2} y_k  y_\ell  \comi{y}^{-5/2} u \Big\|_{L^2}
 +  C  \Big(\norm{P  u}_{L^2}+\norm{ u}_{L^2}\Big).
  \end{eqnarray*}
   This, along with \eqref{+keyest+} and \eqref{keyest+},   yields
 \begin{eqnarray}\label{+keyest+++}
&&      \norm{\comi{\partial_x V(x)}^{1\over 20} u }_{L^2} 
+ \sum_{\ell=1}^n \norm{\comi{\partial_x V(x)}^{1\over 10} y_\ell \comi{y}^{-1} u }_{L^2}\nonumber \\
  &&   \qquad\qquad\qquad +\sum_{\ell=1}^n\sum_{j=1}^n\sum_{p=1}^n  \Big\|  \abs{b_{jp}^\tau(x)}^{1/2 } \comi{\partial_x V(x)}^{-3/4}  y_\ell  \comi{y}^{-5/2} u \Big\|_{L^2}\\
  &\leq&    C\Big(\norm{P u}_{L^2}+ \norm{u}_{L^2}\Big).   \nonumber 	
\end{eqnarray}
Moreover, observing 
\begin{eqnarray*}
	\abs{b_{jp}^\tau(x)}^{1/17}  \comi y^{45/34}\leq \frac{2}{17} \abs{b_{jp}^\tau(x)}^{1/2}\comi{\partial_x V(x)}^{-3/4}+ \frac{15}{17} \comi{\partial_x V(x)}^{1/10}\comi y^{3/2}
\end{eqnarray*}
due to  Young's inequality,  we conclude 
\begin{eqnarray*}
&& \Big\|   \abs{b_{jp}^\tau(x)}^{1/17 }  \comi{y}^{45/34} y_\ell  \comi{y}^{-5/2} u \Big\|_{L^2}\\
 &\leq& C  \Big\|  \abs{b_{jp}^\tau(x)}^{1/2 } \comi{\partial_x V(x)}^{-3/4}  y_\ell  \comi{y}^{-5/2} u \Big\|_{L^2}+C \norm{\comi{\partial_x V(x)}^{1\over 10}  \comi{y}^{3/2} y_\ell \comi{y}^{-5/2} u }_{L^2},
\end{eqnarray*}
that is,
\begin{eqnarray*}
&& \Big\|    \abs{b_{jp}^\tau(x)}^{1\over17 }    y_\ell  \comi{y}^{-{20\over 17}} u \Big\|_{L^2}\\
 &\leq& C  \Big\|  \abs{b_{jp}^\tau(x)}^{1/2 } \comi{\partial_x V(x)}^{-3/4}  y_\ell  \comi{y}^{-5/2} u \Big\|_{L^2}+C \norm{\comi{\partial_x V(x)}^{1\over 10}    y_\ell \comi{y}^{-1} u }_{L^2}.
\end{eqnarray*}
Combing the above estimate and \eqref{+keyest+++},  it follows that 
\begin{eqnarray}\label{esent}
       \norm{\comi{\partial_x V(x)}^{1\over 20} u }_{L^2} 
+ \sum_{\ell=1}^n\sum_{j=1}^n\sum_{p=1}^n  \Big\|    \abs{b_{jp}^\tau(x)}^{1\over17 }    y_\ell  \comi{y}^{-{20\over17}} u \Big\|_{L^2} 
 \leq    C\Big(\norm{P u}_{L^2}+ \norm{u}_{L^2}\Big).  	
\end{eqnarray}
Now we use \eqref{comest} to obtain
\begin{eqnarray*}
	\Big\|    \abs{b_{jp}^\tau(x)}^{1\over34 }      \comi{y}^{-{20\over17}} u \Big\|_{L^2}  	&\leq & C\inner{\Big\|    \abs{b_{jp}^\tau(x)}^{1\over17 }    y_\ell  \comi{y}^{-{20\over17}} u \Big\|_{L^2}+\norm{ \partial_{y_\ell}  \comi{y}^{-{20\over17}} u }_{L^2}}\\
		&\leq &C \Big\|    \abs{b_{jp}^\tau(x)}^{1\over17 }    y_\ell  \comi{y}^{-{20\over17}} u \Big\|_{L^2} +C\inner{\norm{P u }_{L^2}+\norm{  u }_{L^2}},
\end{eqnarray*}
which together with \eqref{esent} gives
\begin{eqnarray*}
	   && \norm{\comi{\partial_x V(x)}^{1\over 20} u }_{L^2} 
+ \sum_{\ell=1}^n\sum_{j=1}^n\sum_{p=1}^n  \Big\|    \abs{b_{jp}^\tau(x)}^{1\over17 }    y_\ell  \comi{y}^{-{20\over17}} u \Big\|_{L^2} +\sum_{j=1}^n\sum_{p=1}^n\Big\|    \abs{b_{jp}^\tau(x)}^{1\over34 }      \comi{y}^{-{20\over17}} u \Big\|_{L^2} \\
 &\leq&    C\Big(\norm{P u}_{L^2}+ \norm{u}_{L^2}\Big),
\end{eqnarray*}
that is, 
\begin{eqnarray*}
	  \norm{\comi{\partial_x V(x)}^{1\over 20} u }_{L^2} 
+  \sum_{j=1}^n\sum_{p=1}^n  \Big\|    \abs{b_{jp}^\tau(x)}^{1\over34 }     \comi{y}^{-{3\over17}} u \Big\|_{L^2} 
 \leq    C\Big(\norm{P u}_{L^2}+ \norm{u}_{L^2}\Big).
\end{eqnarray*}
As a result, using the inequality 
\begin{eqnarray*}
	\abs{b_{jp}^\tau(x)}^{1\over 40}\leq \frac{17}{20}\abs{b_{jp}^\tau(x)}^{1\over 34 }\comi y^{-{3\over 17}}+ \frac{3}{20}\comi y , 
\end{eqnarray*}
we conclude
\begin{eqnarray}\label{1237}
 \norm{\comi{\partial_x V(x)}^{1\over 20} u }_{L^2} 
+  \sum_{j=1}^n\sum_{p=1}^n  \norm{ \abs{b_{jp}^\tau(x)}^{1\over 40}      u }_{L^2}
  &\leq&     C\Big(\norm{P u}_{L^2}+ \norm{\comi y u}_{L^2}\Big).\nonumber
\\
 &\leq&     C\Big(\norm{P u}_{L^2}+ \norm{u}_{L^2}\Big).
 \end{eqnarray}
Recall $A_\tau=B_\tau^TB_\tau$, that is 
\begin{eqnarray*}
	a_{ij}^\tau(x)=\sum_{p=1}^nb_{pi}^\tau(x)b_{pj}^\tau(x).
\end{eqnarray*}
Then   
\begin{eqnarray*}
	\sum_{i=1}^n\sum_{j=1}^n \norm{\abs{a_{ij}^\tau(x)}^{1/80} u}_{L^2} \leq  C \sum_{p=1}^n\sum_{j=1}^n \norm{\abs{b_{pj}^\tau(x)}^{1/40} u}_{L^2}.   
\end{eqnarray*}
Thus, combining \eqref{1237}, we conclude   
\begin{eqnarray*}
	 \norm{\comi{\partial_x V(x)}^{1\over 20} u }_{L^2} 
+  \sum_{i=1}^n\sum_{j=1}^n \norm{\abs{a_{ij}^\tau(x)}^{1/80} u}_{L^2} \leq  C\Big(\norm{P u}_{L^2}+ \norm{ u}_{L^2}\Big).
\end{eqnarray*}
The proof  of Theorem \ref{+Hypo} is thus complete. 
\end{proof}


\begin{thebibliography}{1}

\bibitem{ahl}
R. Alexandre, F. H\'erau and W.-X. Li,
{\em Global hypoelliptic and symbolic estimates for the linearized Boltzmann operator without angular cutoff}.  Preprint, arXiv:1212.4632.

\bibitem{AH}
Y. Almog, B. Helffer, {\em  On the spectrum of non-selfadjoint Schrödinger operators with compact resolvent}. Comm. Partial Differential Equations {\bf 40} (2015), no. 8, 1441--1466.

\bibitem{B0CaNo}
P. Bolley, J. Camus and J. Nourrigat,   {\em La condition de H\"ormander-Kohn pour les op\'erateurs pseudo-diff\'erentiels}.   Comm. Partial Differential Equations {\bf 7} (1982), no. 2, 197--221.

\bibitem{Bouchut02}
 F.~Bouchut, \emph{Hypoelliptic regularity in kinetic equations}, J.
Math. Pure
  Appl. \textbf{81} (2002), 1135--1159.


\bibitem{chenlixu}
H. Chen,  W.-X. Li and  C.-J. Xu,   \emph{Gevrey hypoellipticity for linear and non-linear Fokker-Planck equations.}   J. Differential Equations {\bf 246} (2009), no. 1, 320--339

\bibitem{DesvVillani01}
L.~Desvillettes and   C.~Villani, \emph{On the trend to global
equilibrium in
  spatially inhomogeneous entropy-dissipating systems: the linear
  {F}okker-{P}lanck equation}, Comm. Pure Appl. Math. \textbf{54} (2001),
  no.~1, 1--42.

\bibitem{HelfferNier05}
B.~Helffer and  F.~Nier, ``Hypoelliptic estimates and spectral theory
for
  {F}okker-{P}lanck operators and {W}itten {L}aplacians'', Lecture Notes in
  Mathematics, vol. 1862, Springer-Verlag, Berlin, 2005.

\bibitem{Hel-Nou}
 B.~Helffer  and  J.~Nourrigat,  ``Hypoellipticit\'e maximale pour des
  op\'erateurs polyn\^omes de champs de vecteurs", Progress in Mathematics,
  vol.~58, Birkh\"auser Boston Inc., 1985.

\bibitem{Herau15}
  F.~H{\'e}rau and   L. Thomann , \emph{On global existence and trend to the equilibrium for the Vlasov-Poisson-Fokker-Planck system with exterior confining potential}.  Preprint,  arXiv:1505.01698.

\bibitem{Herau07}
  F.~H{\'e}rau, \emph{Short and long time behavior of the Fokker-Planck equation in a confining potential and applications}. J. Funct. Anal. {\bf 244} (2007), no. 1, 95-–118.

\bibitem{Herau06}
 F.~H{\'e}rau, \emph{Hypocoercivity and exponential time decay for the linear inhomogeneous relaxation Boltzmann equation}. Asymptot. Anal. {\bf 46}  (2006), no. 3-4, 349–-359.
 
  \bibitem{hlkyto}
F. H\'erau and W.-X. Li,
{\em Global hypoelliptic estimates for Landau-type operator with external potential.}
Kyoto J.   Math.  {\bf 53} (2013),  533-565 

\bibitem{HerauNier04}
 F.~H{\'e}rau and   F.~Nier, \emph{Isotropic hypoellipticity and trend
to
  equilibrium for the {F}okker-{P}lanck equation with a high-degree potential},
  Arch. Ration. Mech. Anal. \textbf{171} (2004), no.~2, 151--218.

\bibitem{Hormander85}
 L.~H{\"o}rmander,  ``The analysis of linear partial differential
operators'', Springer-Verlag, Berlin, 1985.
  
 

\bibitem{MR2599384}
N.~Lerner,
\newblock  `` Metrics on the phase space and non-selfadjoint
  pseudo-differential operators'',  
\newblock Birkh\"auser Verlag, Basel, 2010.

\bibitem{Lernerpersonal}
N.~Lerner.  Personal communication

\bibitem{lipisa}
W.-X. ~Li.
\newblock {\em  Global hypoellipticity and compactness of resolvent for Fokker-Planck operator},   Ann. Sc. Norm. Super. Pisa Cl. Sci. (5)  {\bf 11} (2012), 789--815.

\bibitem{Nier-BJ}
F. Nier,   {\em Hypoellipticity for Fokker-Planck operators and Witten Laplacians},  ``Lectures on the analysis of nonlinear partial differential equations'',  Morningside Lect. Math., 1,  Int. Press, Somerville, MA, 2012, Part 1, 31--84.


\bibitem{Nier-GFP}
F. Nier,  {\em Boundary conditions and subelliptic estimates for geometric Kramers-Fokker-Planck operators on manifolds with boundaries}.  arXiv 1309.5070

 
  
  \bibitem{villani}
 C. Villani,  {\em Hypocoercivity},  Mem. Amer. Math. Soc., 202(2009),
     
  \bibitem{wang15}
  X. P. Wang,  {\em  Large-time asymptotics of solutions to the Kramers-Fokker-Planck equation with a short-range potential}. Comm. Math. Phys. {\bf 336} (2015), no. 3, 1435--1471.
     

\end{thebibliography}
\end{document}